\documentclass[10pt,a4paper]{amsart}
\usepackage[latin9]{inputenc}
\synctex=-1
\usepackage{amsthm}
\usepackage{amstext}
\usepackage{amssymb}
\usepackage{esint}
\usepackage[unicode=true,pdfusetitle,
 bookmarks=true,bookmarksnumbered=false,bookmarksopen=false,
 breaklinks=false,pdfborder={0 0 1},backref=false,colorlinks=false]
 {hyperref}

\makeatletter
\numberwithin{equation}{section}
\numberwithin{figure}{section}
\theoremstyle{plain}
\newtheorem{thm}{\protect\theoremname}
  \theoremstyle{definition}
  \newtheorem{defn}[thm]{\protect\definitionname}
  \theoremstyle{remark}
  \newtheorem{rem}[thm]{\protect\remarkname}
  \theoremstyle{plain}
  \newtheorem{lem}[thm]{\protect\lemmaname}

\usepackage{amssymb,amsthm,amsmath,amsfonts,amscd}
\usepackage{amsaddr}
\usepackage{graphicx}
\usepackage{url}
\usepackage{color}
\usepackage[active]{srcltx}
\usepackage[matrix,arrow]{xy}
\usepackage{mathrsfs}
\usepackage{enumerate}
\usepackage{amsopn} 
\usepackage{bbm} 
\usepackage{cite}
\usepackage{hyperref}
\allowdisplaybreaks[1]
\usepackage[english]{babel}
\usepackage{bbm}
\usepackage{stmaryrd}
\usepackage{mdef}
\date{\today}

\thanks{{\bf Acknowledgements:}  We thank the DFG for support through the research project ``Random dynamical systems and regularization by noise for stochastic partial differential equations'' and through CRC 701.  B.G. thanks Jonas Tölle for stimulating discussion.}
\keywords{singular degenerate SPDE, multivalued SPDE, self-organized criticality, stochastic fast diffusion, sign fast diffusion, regularity, stochastic variational inequalities}
\subjclass[2010]{Primary: 60H15; Secondary: 76S05}

\makeatother

  \providecommand{\definitionname}{Definition}
  \providecommand{\lemmaname}{Lemma}
  \providecommand{\remarkname}{Remark}
\providecommand{\theoremname}{Theorem}

\begin{document}

\title[Singular-degenerate multivalued SFDE]{Singular-degenerate multivalued stochastic fast diffusion equations}

\author{Benjamin Gess}

\address{Department of Mathematics \\
University of Chicago \\
USA }

\email{gess@uchicago.edu}

\author{Michael Röckner}

\address{Faculty of Mathematics \\
University of Bielefeld \\
Germany}

\email{roeckner@mathematik.uni-bielefeld.de}
\begin{abstract}
We consider singular-degenerate, multivalued stochastic fast diffusion equations with multiplicative Lipschitz continuous noise. In particular, this includes the stochastic sign fast diffusion equation arising from the Bak-Tang-Wiesenfeld model for self-organized criticality. A well-posedness framework based on stochastic variational inequalities (SVI) is developed, characterizing solutions to the stochastic sign fast diffusion equation, previously obtained in a limiting sense only. Aside from generalizing the SVI approach to stochastic fast diffusion equations we develop a new proof of well-posedness, applicable to general diffusion coefficients. In case of linear multiplicative noise, we prove the existence of (generalized) strong solutions, which entails higher regularity properties of solutions than previously known.
\end{abstract}
\maketitle

\section{Introduction}

We consider singular-degenerate, multivalued stochastic fast diffusion equations (SFDE) of the type 
\begin{align}
dX_{t} & \in\D(|X_{t}|^{m-1}X_{t})dt+B(t,X_{t})dW_{t},\label{eq:SFDE}\\
X_{0} & =x_{0}\nonumber 
\end{align}
with $m\in[0,1]$, on bounded, smooth domains $\mcO\subseteq\R^{d}$ with zero Dirichlet boundary conditions, where $|r|^{-1}r:=\Sgn(r)$ denotes the maximal monotone extension of the sign function. In particular, we include the multivalued case $m=0$ and general diffusion coefficients $B$. In the following $W$ is a cylindrical Wiener process on some separable Hilbert space $U$ and the diffusion coefficients $B:[0,T]\times H^{-1}\times\O\to L_{2}(U,H^{-1})$ take values in the space of Hilbert-Schmidt operators $L_{2}(U,H^{-1})$, where $H^{-1}$ is the dual of $H_{0}^{1}(\mcO)$. 

Our main results are twofold: First, in the case of general diffusion coefficients and initial data we introduce a notion of stochastic variational inequalities (SVI) to \eqref{eq:SFDE} and establish a new method of proof of well-posedness. In particular, this new methods allows treatment of general diffusion coefficients, whereas previously the approach of SVI solutions was restricted to additive or linear multiplicative noise (cf.~\eqref{eq:TV_intro-additive}, \eqref{eq:TV_intro-lin-mult} below). In this sense, our results generalize those of \cite{BR13,BDPR09}. The second main result yields regularity properties of solutions in the case of linear multiplicative noise (cf.~\eqref{eq:SFDE-lin-mult} below). In particular, we prove the existence of strong solutions, which extends the results from \cite{G12} from the degenerate case $m>1$ to the singular case $m\in[0,1]$.

In the case $m>0$ a variational approach to \eqref{eq:SFDE} has been developed in \cite{RRW07} for $x_{0}\in L^{2}(\O;H^{-1})$ based on the coercivity property 
\[
_{V^{*}}\<\D(|v|^{m-1}v),v\>_{V}\ge c\|v\|_{V}^{m+1}\quad\forall v\in V=L^{m+1}(\mcO).
\]
In the multivalued limiting case $m=0$ two complications appear: First, the reflexivity of the energy space $L^{m+1}(\mcO)$ is lost, making the variational methods from \cite{RRW07} inapplicable in this case. Second, the operator $\D(|v|^{m-1}v)=\D\Sgn(v)$ becomes multivalued. Recently, for regular initial data $x_{0}\in L^{2}(\O;L^{2}(\mcO))$ an alternative variational approach to \eqref{eq:SFDE} has been developed in \cite{GT11}, proving well posedness for $m\in[0,1]$. However, for general initial conditions $x_{0}\in L^{2}(\O;H^{-1})$ solutions could be constructed in a limiting sense only. That is, it has been shown that for each approximating sequence $x_{0}^{n}\in L^{2}(\O;L^{2}(\mcO))$ with $x_{0}^{n}\to x$ in $L^{2}(\O;H^{-1})$ the corresponding variational solutions $X^{n}$ converge to a limit $X$ independent of the chosen approximating sequence $x_{0}^{n}$. The characterization of $X$ in terms of a generalized notion of solution to \eqref{eq:SFDE} remained open. This problem is solved in this paper by introducing a notion of stochastic variational inequalities for \eqref{eq:SFDE} and proving well-posedness in this framework. The limiting solution $X$ is thus characterized as an SVI solution to \eqref{eq:SFDE}.

The difficulties for \eqref{eq:SFDE} as described above are similar to the ones for the stochastic total variation flow
\begin{align}
dX_{t} & \in\div\left(\frac{\nabla X_{t}}{|\nabla X_{t}|}\right)dt+B(t,X_{t})dW_{t}\label{eq:TV_intro-2}\\
X_{0} & =x_{0}.\nonumber 
\end{align}
As for \eqref{eq:SFDE}, in case of regular initial data $x_{0}\in L^{2}(\O;H_{0}^{1}(\mcO))$ variational solutions have been constructed in \cite{GT11}. For general initial data $x_{0}\in L^{2}(\O;L^{2}(\mcO))$ solutions to \eqref{eq:TV_intro-2} could be constructed in a limiting sense only. In the special case of additive noise, i.e.
\begin{align}
dX_{t} & \in\div\left(\frac{\nabla X_{t}}{|\nabla X_{t}|}\right)dt+dW_{t}\label{eq:TV_intro-additive}
\end{align}
and assuming $d=1,2$, a notion of SVI solution to \eqref{eq:TV_intro-additive} has been introduced in \cite{BDPR09}. Only recently, well-posedness of SVI solutions and thus characterization of limiting solutions in the linear multiplicative case 

\begin{align}
dX_{t} & \in\div\left(\frac{\nabla X_{t}}{|\nabla X_{t}|}\right)dt+\sum_{k=1}^{\infty}f_{k}X_{t}d\b_{t}^{k}\label{eq:TV_intro-lin-mult}
\end{align}
has been shown in \cite{BR13}. In this sense, our results on well-posedness of SVI solutions to \eqref{eq:SFDE} parallel those of \cite{BR13,BDPR09} in the case of stochastic fast diffusion equations. In both cases \eqref{eq:TV_intro-additive} and \eqref{eq:TV_intro-lin-mult}, the SPDE may be transformed into a random PDE, i.e. a PDE with random coefficients. This technique is a crucial ingredient in the proofs given in \cite{BR13,BDPR09} and requires the restriction to either additive or linear multiplicative noise. In contrast to this, in the first part of this paper (see Section \ref{sec:SVI}) we consider \eqref{eq:SFDE} for general multiplicative noise and introduce an alternative method to prove well-posedness of SVI solutions that does not rely on a transformation into a random PDE. This allows to treat general noise, while significantly simplifying the proof as compared to \cite{BR13}. Moreover, in contrast to \cite{BDPR09} no restrictions on the dimension $d$ will be required.

In the second part of this paper (see Section \ref{sec:Strong-solutions}) we prove regularity properties for solutions to \eqref{eq:SFDE} in the case of linear multiplicative noise, i.e. for
\begin{align}
dX_{t} & \in\D(|X_{t}|^{m-1}X_{t})dt+\sum_{k=1}^{\infty}f_{k}X_{t}d\b_{t}^{k},\label{eq:SFDE-lin-mult}\\
X_{0} & =x_{0},\nonumber 
\end{align}
with $m\in[0,1]$, $d\in\N$ and $x_{0}\in L^{2}(\O;H^{-1})$. More precisely, we prove the existence of (generalized) strong solutions (cf. Definition \ref{def:strong_soln-general} below), in particular implying that $X$ takes values in the domain of $\D(|\cdot|^{m-1}\cdot)$, $dt\otimes\P$-almost everywhere. This extends regularity results obtained in \cite{G12} where the degenerate case $m\ge1$ was considered by entirely different methods. The case of singular diffusions ($m\in[0,1)$) could not be handled in \cite{G12} due to the singularity of the non-linearity $\phi(r)=|r|^{m-1}r$ at zero. Roughly speaking, this singularity has to be compensated by sufficient decay of the diffusion coefficients at zero; a problem not appearing in degenerate, non-singular cases treated in \cite{GR14}. This requires a careful choice of approximating problems and leads to entirely different methods than those developed in \cite{G12}. In particular, it turns out that different approximations of the nonlinearity $\phi$ need to be considered in the proof of regularity for \eqref{eq:SFDE-lin-mult} and in the proof of well-posedness of SVI solutions for \eqref{eq:SFDE}. We underline that also the methods developed in the second part of this paper do not rely on a transformation of \eqref{eq:SFDE-lin-mult} into a random PDE and therefore depend only loosely on the linear structure of the noise in \eqref{eq:SFDE-lin-mult}. In fact, as pointed out above, the crucial structural condition in \eqref{eq:SFDE-lin-mult} is \textit{not} the linearity of the diffusion coefficients, but their decay behavior at zero. 

Stochastic fast diffusion equations of the type \eqref{eq:SFDE} have been intensively investigated in recent years. For the single-valued case $m>0$ we refer to \cite{RRW07,L10} and the references therein. As the multivalued, limiting case $m=0$ is concerned, mostly the case of linear multiplicative noise \eqref{eq:SFDE-lin-mult} has been considered in the literature. Well-posedness for regular initial data $x_{0}\in L^{4}(\mcO)$ and $d=1,2,3$ was first proven in \cite{BDPR09-2}. Finite time extinction for \eqref{eq:SFDE-lin-mult} has been investigated in \cite{BDPR09-2,BDPR09-3,BDPR12,BR11,G13-5,RW13}. For bounded initial data $x_{0}\in L^{\infty}(\mcO)$ and finite driving noise, that is $f_{k}\equiv0$ for all $k$ large enough, the existence of strong solutions (cf. Definition \ref{def:strong_soln-general} below) to \eqref{eq:SFDE-lin-mult} has been proven in \cite{G13-5} by entirely different methods, relying on a transformation of \eqref{eq:SFDE-lin-mult} into a random PDE. Well-posedness for \eqref{eq:SFDE} with $m=1$ and with general multiplicative noise has been obtained in \cite{GT11} for the first time, proving well-posedness in terms of variational solutions for regular initial data $x_{0}\in L^{2}(\mcO)$. For background on the deterministic fast diffusion equation we refer to \cite{V06,V07} and the references therein.

\subsection{Notation}

In the following let $\mcO\subseteq\R^{d}$ be a bounded set with smooth boundary. $L^{p}:=L^{p}(\mcO)$ denotes the usual Lebesgue space with norm $\|\cdot\|_{L^{p}}$ and inner product $(\cdot,\cdot)_{2}$ if $p=2$. Further, $H_{0}^{1}:=H_{0}^{1}(\mcO)$ denotes the Sobolev space of order one in $L^{2}$ equipped with the inner product $(v,w)_{H_{0}^{1}}=(\nabla v,\nabla w)_{2}$ and norm $\|\cdot\|_{H_{0}^{1}}$ . Let $(H^{-1},(\cdot,\cdot)_{H^{-1}})$ with norm $\|\cdot\|_{H^{-1}}$ be the dual of $H_{0}^{1}$. Moreover, we let $C_{0}(\bar{\mcO})$ denote the set of all continuous functions on $\mcO$ vanishing at the boundary. In the proofs, as usual, constants may change from line to line.

\section{Stochastic variational inequalities\label{sec:SVI} }

In this section we consider stochastic singular fast diffusion equations of the type 
\begin{align}
dX_{t} & \in\D\left(|X_{t}|^{m-1}X_{t}\right)dt+B(t,X_{t})dW_{t},\label{eq:SVI-SFDE}\\
X_{0} & =x_{0}\nonumber 
\end{align}
for $m\in[0,1]$, on a bounded, smooth domain $\mcO\subseteq\R^{d}$ with Dirichlet boundary conditions and general diffusion coefficients $B$, in particular including additive and linear multiplicative noise. The precise definition of the nonlinear part on the right hand side of \eqref{eq:SVI-SFDE} including its domain, as well as the definition of a solution to \eqref{eq:SVI-SFDE} will be given below. We emphasize that the multivalued, limiting case $m=0$ is included. 

Here $W$ is a cylindrical Wiener process in some separable Hilbert space $U$ defined on a probability space $(\Omega,\mcF,\P)$ with normal filtration $(\mcF_{t})_{t\ge0}$ and the diffusion coefficients $B$ take values in the space of Hilbert-Schmidt operators $L_{2}(U,H)$. As compared to the regularity results obtained in Section \ref{sec:Strong-solutions} below, for this general choice of diffusion coefficients, we cannot expect (generalized) strong solutions to exist for arbitrary initial conditions $x_{0}\in H^{-1}$. Instead we introduce a notion of stochastic variational inequalities for \eqref{eq:SVI-SFDE} which we prove to uniquely characterize solutions.

We suppose that $B:[0,T]\times H^{-1}\times\O$ is progressively measurable and satisfies
\begin{align}
\|B(t,v)-B(t,w)\|_{L_{2}(U,H^{-1})}^{2} & \le C\|v-w\|_{H^{-1}}^{2}\quad\forall v,w\in H^{-1}\label{eq:lipschitz_noise}\\
\|B(t,v)\|_{L_{2}(U,L^{2})}^{2} & \le C(1+\|v\|_{L^{2}}^{2})\quad\forall v\in L^{2},\nonumber 
\end{align}
for some constant $C>0$ and all $(t,\o)\in[0,T]\times\O$.

Let $\mcM$ be the space of all signed Radon measures on $\mcO$ with finite total variation. For $\mu\in\mcM$ we let $|\mu|$ be its variation with total variation 
\[
\TV(\mu;\mcO)=|\mu|(\mcO).
\]
We define (cf. Appendix \ref{sec:heaviside_relaxation}) 
\begin{align*}
L^{m+1}\cap H^{-1} & :=\left\{ v\in L^{m+1}|\int vhdx\le C\|h\|_{H_{0}^{1}},\ \forall h\in C_{c}^{1}(\mcO)\ \text{for some }C\ge0\right\} \\
\mcM\cap H^{-1} & :=\left\{ \mu\in\mcM|\int h(x)d\mu(x)\le C\|h\|_{H_{0}^{1}},\ \forall h\in C_{c}^{1}(\mcO)\ \text{for some }C\ge0\right\} .
\end{align*}
Note that $\mcM\cap H^{-1}$ is known as the space of finite measures of bounded energy (cf. e.g. \cite{M85}). Clearly, we have $L^{m+1}\cap H^{-1}\subseteq H^{-1}$ and $\mcM\cap H^{-1}\subseteq H^{-1}$. For $m>0$ and $v\in H^{-1}$ we define $\vp:H^{-1}\to[0,\infty]$ by 
\[
\vp(v):=\begin{cases}
\frac{1}{m+1}\|v\|_{m+1}^{m+1} & ,\quad v\in L^{m+1}\cap H^{-1}\\
+\infty & ,\quad otherwise.
\end{cases}
\]
By Lemma \ref{lem:lsc_hull_lm} in Appendix \ref{sec:heaviside_relaxation} below, $\vp$ defines a convex, lower-semicontinuous function on $H^{-1}$. Moreover, by Lemma \ref{lem:lsc_hull_l1} 
\[
\vp(\mu):=\begin{cases}
\TV(\mu;\mcO) & ,\quad\mu\in\mcM\cap H^{-1}\\
+\infty & ,\quad otherwise.
\end{cases}
\]
is the lower-semicontinuous hull on $H^{-1}$ of $\|\cdot\|_{1}$ defined on $L^{1}\cap H^{-1}$ . For $m\ge0$ we set 
\[
\psi(r)=\frac{1}{m+1}|r|^{m+1},\quad r\in\R,
\]
and note that 
\begin{align*}
\partial\psi(r) & =|r|^{m}\Sgn(r)=:\phi(r),\quad r\in\R
\end{align*}
where $\Sgn$ is the maximal monotone, multivalued extension of the sign function.

Concerning the subgradient $\partial\vp$ of $\vp$ we have (see Lemma \ref{lem:subgradient} below)
\[
\partial\vp(v)\supseteq\{-\D w|w\in H_{0}^{1},w\in\phi(v)\text{ a.e.}\},
\]
for all $v\in L^{m+1}\cap H^{-1}$. Hence, we may rewrite \eqref{eq:SVI-SFDE} in the \textit{relaxed} form
\begin{align}
dX_{t} & \in-\partial\vp(X_{t})dt+B(t,X_{t})dW_{t},\label{eq:relaxed_form}\\
X_{0} & =x_{0}\nonumber 
\end{align}
and define (generalized) strong solutions according to Definition \ref{def:strong_soln-general} in Appendix \ref{sec:strong_solutions} below.
\begin{defn}
\label{def:SFDE_gen}Let $x_{0}\in L^{2}(\Omega;H^{-1}).$ An $\mcF_{t}$-adapted process $X\in L^{2}(\Omega;C([0,T];H^{-1}))$ is said to be an SVI solution to \eqref{eq:SVI-SFDE} if 
\begin{enumerate}
\item {[}Regularity{]} 
\begin{align*}
\vp(X) & \in L^{1}([0,T]\times\Omega).
\end{align*}

\item {[}Variational inequality{]} For each $\mcF_{t}$-progressively measurable process $G\in L^{2}([0,T]\times\O;H^{-1})$ and each $\mcF_{t}$-adapted process $Z\in L^{2}(\Omega;C([0,T];H^{-1}))\cap L^{2}([0,T]\times\Omega;L^{2})$ solving the equation
\[
Z_{t}-Z_{0}=\int_{0}^{t}G_{s}ds+\int_{0}^{t}B(s,Z_{s})dW_{s},\quad\forall t\in[0,T],
\]
we have
\begin{align}
 & \E\|X_{t}-Z_{t}\|_{H^{-1}}^{2}+2\E\int_{0}^{t}\vp(X_{r})dr\nonumber \\
 & \le\E\|x_{0}-Z_{0}\|_{H^{-1}}^{2}+2\E\int_{0}^{t}\vp(Z_{r})dr\label{eq:SFDE_gen_soln}\\
 & -2\E\int_{0}^{t}(G_{r},X_{r}-Z_{r})_{H^{-1}}dr+C\E\int_{0}^{t}\|X_{r}-Z_{r}\|_{H^{-1}}^{2}dr\quad\forall t\ge0,\nonumber 
\end{align}
for some $C>0$.
\end{enumerate}
\end{defn}
\begin{rem}
\label{rmk:varn_sol-1}If $(X,\eta)$ is a strong solution to \eqref{eq:SVI-SFDE} satisfying $\vp(X)\in L^{1}([0,T]\times\Omega)$ then $X$ is an SVI solution to \eqref{eq:SVI-SFDE}.\end{rem}
\begin{proof}
Definition \ref{def:SFDE_gen} (i) is satisfied by assumption. For (ii): Let $Z\in L^{2}(\Omega;C([0,T];H^{-1}))\cap L^{2}([0,T]\times\Omega;L^{2})$ be a solution to 
\begin{align*}
dZ_{t} & =G_{t}dt+B(t,Z_{t})dW_{t}
\end{align*}
for some $\mcF_{t}$-progressively measurable $G\in L^{2}([0,T]\times\O;H^{-1})$. Then Itô's formula implies:
\begin{align*}
\E\|X_{t}-Z_{t}\|_{H^{-1}}^{2}= & \E\|x_{0}-Z_{0}\|_{H^{-1}}^{2}+2\E\int_{0}^{t}(\eta_{r}-G_{r},X_{r}-Z_{r})_{H^{-1}}dr\\
 & +\E\int_{0}^{t}\|B(r,X_{r})-B(r,Z_{r})\|_{L_{2}(U,H^{-1})}^{2}dr\quad\forall t\in[0,T].
\end{align*}
Since $\eta_{r}\in-\partial\vp(X_{r})$ we have
\[
(\eta_{r},X_{r}-Z_{r})_{H^{-1}}\le\vp(Z_{r})-\vp(X_{r}),\quad dt\otimes d\P-\text{a.e.}
\]
which, using \eqref{eq:lipschitz_noise}, implies \eqref{eq:SFDE_gen_soln}.
\end{proof}
The main result of the current section is the proof of well-posedness of \eqref{eq:SVI-SFDE} in the sense of Definition \ref{def:SFDE_gen}:
\begin{thm}
\label{thm:periodic_main-1}Let $x_{0}\in L^{2}(\Omega;H^{-1}).$ Then there is a unique SVI solution $X$ to \eqref{eq:SVI-SFDE} in the sense of Definition \ref{def:SFDE_gen}. For two SVI solutions $X$, $Y$ with initial conditions $x_{0},y_{0}\in L^{2}(\Omega;H^{-1})$ we have
\[
\sup_{t\in[0,T]}\E\|X_{t}-Y_{t}\|_{H^{-1}}^{2}\le C\E\|x_{0}-y_{0}\|_{H^{-1}}^{2}.
\]
The unique SVI solution $X$ coincides with the limiting solution to \eqref{eq:SVI-SFDE} constructed in \cite{GT11}.\end{thm}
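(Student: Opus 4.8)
\emph{Strategy.} The plan is to deduce the theorem from the well-posedness of variational (strong) solutions for regular initial data obtained in \cite{GT11}, combined with Remark \ref{rmk:varn_sol-1} and a density argument; the whole argument never transforms \eqref{eq:SVI-SFDE} into a random PDE (in contrast to \cite{BR13,BDPR09}) and is therefore insensitive to the structure of $B$. It rests on three facts about the regular-data solutions. (a) For $x_{0}\in L^{2}(\O;L^{2})$ the variational solution $X$ of \cite{GT11} is a strong solution $(X,\eta)$ in the sense of Definition \ref{def:strong_soln-general} with $\vp(X)\in L^{1}([0,T]\times\O)$, $X\in L^{2}([0,T]\times\O;L^{2})$ and $\eta\in L^{2}([0,T]\times\O;H^{-1})$; hence $X$ is an admissible comparison process $Z$ in Definition \ref{def:SFDE_gen}(ii) and, by Remark \ref{rmk:varn_sol-1}, itself an SVI solution. (b) Applying It\^o's formula to $\|X_{t}\|_{H^{-1}}^{2}$, using that $-\eta_{r}\in\partial\vp(X_{r})$ and the positive $(m+1)$-homogeneity of $\vp$ give $(\eta_{r},X_{r})_{H^{-1}}=-(m+1)\vp(X_{r})$, together with \eqref{eq:lipschitz_noise}, Gronwall yields the uniform bound $\E\sup_{t\le T}\|X_{t}\|_{H^{-1}}^{2}+\E\int_{0}^{T}\vp(X_{r})\,dr\le C(1+\E\|x_{0}\|_{H^{-1}}^{2})$, whose right-hand side involves only the $H^{-1}$-norm of $x_{0}$. (c) By monotonicity of $\partial\vp$ on $H^{-1}$ and \eqref{eq:lipschitz_noise}, two regular-data solutions $X,Y$ with data $x_{0},y_{0}$ satisfy $\sup_{t\le T}\E\|X_{t}-Y_{t}\|_{H^{-1}}^{2}\le e^{CT}\E\|x_{0}-y_{0}\|_{H^{-1}}^{2}$; in particular, the solutions $X^{n}$ are Cauchy in $L^{2}(\O;C([0,T];H^{-1}))$ along any $x_{0}^{n}\in L^{2}(\O;L^{2})$ with $x_{0}^{n}\to x_{0}$ in $L^{2}(\O;H^{-1})$, and their limit $X^{\mathrm{lim}}$ is exactly the limiting solution of \cite{GT11}.

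\emph{Uniqueness and identification.} Let $X$ be an arbitrary SVI solution with data $x_{0}\in L^{2}(\O;H^{-1})$ and let $X^{n}$ be the regular-data solutions for $x_{0}^{n}\to x_{0}$. Inserting $Z=X^{n}$, $G=\eta^{n}$ into \eqref{eq:SFDE_gen_soln} and using $-\eta_{r}^{n}\in\partial\vp(X_{r}^{n})$, i.e. $-(\eta_{r}^{n},X_{r}-X_{r}^{n})_{H^{-1}}\le\vp(X_{r})-\vp(X_{r}^{n})$, all $\vp$-terms cancel (they are in $L^{1}$), leaving $\E\|X_{t}-X_{t}^{n}\|_{H^{-1}}^{2}\le\E\|x_{0}-x_{0}^{n}\|_{H^{-1}}^{2}+C\E\int_{0}^{t}\|X_{r}-X_{r}^{n}\|_{H^{-1}}^{2}\,dr$; Gronwall gives $\sup_{t\le T}\E\|X_{t}-X_{t}^{n}\|_{H^{-1}}^{2}\le e^{CT}\E\|x_{0}-x_{0}^{n}\|_{H^{-1}}^{2}\to0$. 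Hence $X=X^{\mathrm{lim}}$: every SVI solution coincides with $X^{\mathrm{lim}}$, which proves uniqueness and the identification with the \cite{GT11} limiting solution. The stability estimate follows identically: for SVI solutions $X,Y$ with data $x_{0},y_{0}$, insert $Z=Y^{n}$ (the regular-data solution for $y_{0}^{n}\to y_{0}$) into the inequality for $X$, obtain $\E\|X_{t}-Y_{t}^{n}\|_{H^{-1}}^{2}\le e^{Ct}\E\|x_{0}-y_{0}^{n}\|_{H^{-1}}^{2}$, and let $n\to\infty$ using $Y^{n}\to Y$.

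\emph{Existence.} It remains to verify that $X^{\mathrm{lim}}$ is an SVI solution. Regularity: by (b), $\sup_{n}\E\int_{0}^{T}\vp(X_{r}^{n})\,dr<\infty$, and since $X^{n}\to X^{\mathrm{lim}}$ in $L^{2}(\O;C([0,T];H^{-1}))$, lower semicontinuity of $\vp$ on $H^{-1}$ (Lemma \ref{lem:lsc_hull_lm}, resp. Lemma \ref{lem:lsc_hull_l1} for $m=0$) and Fatou's lemma give $\E\int_{0}^{T}\vp(X_{r}^{\mathrm{lim}})\,dr\le\liminf_{n}\E\int_{0}^{T}\vp(X_{r}^{n})\,dr<\infty$, i.e. Definition \ref{def:SFDE_gen}(i). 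Variational inequality: each $X^{n}$ satisfies \eqref{eq:SFDE_gen_soln} with $x_{0}^{n}$ by Remark \ref{rmk:varn_sol-1}; for a fixed admissible pair $(Z,G)$ one passes to the limit $n\to\infty$, the strong convergence $X^{n}\to X^{\mathrm{lim}}$ controlling the terms $\E\|X_{t}^{n}-Z_{t}\|_{H^{-1}}^{2}$, $\E\|x_{0}^{n}-Z_{0}\|_{H^{-1}}^{2}$, $\E\int_{0}^{t}(G_{r},X_{r}^{n}-Z_{r})_{H^{-1}}\,dr$ and $\E\int_{0}^{t}\|X_{r}^{n}-Z_{r}\|_{H^{-1}}^{2}\,dr$, while $\liminf_{n}\E\int_{0}^{t}\vp(X_{r}^{n})\,dr\ge\E\int_{0}^{t}\vp(X_{r}^{\mathrm{lim}})\,dr$ by Fatou and lower semicontinuity; this yields \eqref{eq:SFDE_gen_soln} for $X^{\mathrm{lim}}$ (with the same $C$), so $X^{\mathrm{lim}}$ is an SVI solution.

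\emph{Main obstacle.} The soft parts — monotonicity, Gronwall, lower semicontinuity, density — are routine; the real work lies in fact (a), namely establishing that the \cite{GT11} variational solutions for $L^{2}(\O;L^{2})$-data are genuine strong solutions with $\eta\in L^{2}([0,T]\times\O;H^{-1})$ and $X\in L^{2}([0,T]\times\O;L^{2})$, hence admissible comparison processes. This is where one must introduce a careful approximation of the singular nonlinearity $\phi(r)=|r|^{m-1}r$ (distinct from the one used for \eqref{eq:SFDE-lin-mult} in Section \ref{sec:Strong-solutions}) together with an approximation of the initial data, and control the singularity of $\phi$ at $0$ for $m<1$, the loss of reflexivity of $L^{m+1}$ at $m=0$, and the possible multivaluedness of $\partial\vp$, relying on the lower-semicontinuous-hull descriptions of Lemma \ref{lem:lsc_hull_lm}, Lemma \ref{lem:lsc_hull_l1} and the subgradient inclusion of Lemma \ref{lem:subgradient}.
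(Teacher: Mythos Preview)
Your overall architecture --- construct SVI solutions as limits of regular-data solutions, and prove uniqueness by plugging the regular-data solutions in as comparison processes $Z$ in \eqref{eq:SFDE_gen_soln} --- matches the paper's. The crucial divergence is in \emph{which} regular-data solutions serve as $Z$.

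You propose to use the variational solutions $X^{n}$ of \cite{GT11} themselves, relying on your fact (a): that for $x_{0}\in L^{2}(\O;L^{2})$ and \emph{general} $B$ satisfying \eqref{eq:lipschitz_noise}, the solution is strong with $\eta^{n}\in L^{2}([0,T]\times\O;H^{-1})$. But this is exactly what the paper \emph{cannot} obtain for general $B$: the regularity $\eta\in L^{2}([0,T]\times\O;H^{-1})$ (equivalently $\phi(X)\in L^{2}([0,T]\times\O;H_{0}^{1})$) is the content of Theorem \ref{thm:FDE}, proved only for \emph{linear multiplicative} noise, and its proof hinges on controlling the It\^o correction for $\vp$ via $\dot{\phi}(v)v^{2}\le C\psi(v)$ --- a cancellation with no analogue under the bare hypothesis \eqref{eq:lipschitz_noise}. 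Without (a), $G=\eta^{n}$ is not known to lie in $L^{2}([0,T]\times\O;H^{-1})$, so $X^{n}$ is not an admissible $Z$ in Definition \ref{def:SFDE_gen}(ii), and your uniqueness argument does not start.

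The paper avoids (a) entirely: it inserts as $Z$ the solutions $Y^{\ve,n}$ of the \emph{doubly} regularized equation $dY=\ve\Delta Y\,dt+\Delta\phi^{\ve}(Y)\,dt+B(\cdot,Y)\,dW$ (viscosity plus Moreau--Yosida), for which $G=\ve\Delta Y^{\ve,n}+\Delta\phi^{\ve}(Y^{\ve,n})\in L^{2}([0,T]\times\O;H^{-1})$ is automatic from Lemma \ref{lem:fde-l2-test}. The price is that now $-G\in\partial\vp^{\ve}(Y^{\ve,n})$, not $\partial\vp(Y^{\ve,n})$, so your clean one-line subgradient cancellation is unavailable; instead one passes from $\vp^{\ve}$ to $\vp$ via the quantitative estimate \eqref{eq:yosida_convergence-2}, and --- since the resulting inequality is first obtained only against $x\in L^{2}$ --- invokes that $\vp$ is the lower-semicontinuous hull of $\vp_{|L^{2}}$ to extend it to $x=X_{r}(\o)\in H^{-1}$. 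Your closing paragraph gestures toward introducing such an approximation of $\phi$, but once you do so you are no longer comparing with solutions of the original equation, the subgradient step you wrote does not survive, and you are driven back to precisely the paper's argument.
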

\begin{proof}
We construct SVI solutions to \eqref{eq:SVI-SFDE} by considering appropriate approximations by strong solutions. The specific form of the construction will also be a crucial ingredient in the proof of uniqueness. 

\textbf{Step 1:} Existence

We consider approximating SPDE of the form
\begin{align}
dX_{t}^{\ve} & =\ve\D X_{t}^{\ve}dt+\D\phi^{\ve}(X_{t}^{\ve})dt+B(t,X_{t}^{\ve})dW_{t},\label{eq:approx_SVI_constr}\\
X_{0}^{\ve} & =x_{0},\nonumber 
\end{align}
with $\ve>0$, $x_{0}\in L^{2}(\O;L^{2})$ and $\psi^{\ve},\phi^{\ve}$ as in Appendix \ref{sec:Moreau-Yosida}. By Lemma \ref{lem:fde-l2-test} there is a unique strong solution $X^{\ve}$ to \eqref{eq:approx_SVI_constr} satisfying 
\begin{equation}
\E\sup_{t\in[0,T]}\|X_{t}^{\ve}\|_{2}^{2}+2\ve\E\int_{0}^{T}\|X_{r}^{\ve}\|_{H_{0}^{1}}^{2}dr\le C(\E\|x_{0}\|_{2}^{2}+1),\label{eq:SFDE_strong_bound}
\end{equation}
for some $C>0$ independent of $\ve>0$. For two solutions $X^{\ve_{1}},X^{\ve_{2}}$ to \eqref{eq:approx_SVI_constr} with initial conditions $x_{0}^{1},x_{0}^{2}\in L^{2}(\O;L^{2})$ we have
\begin{align*}
e^{-Kt}\|X_{t}^{\ve_{1}}-X_{t}^{\ve_{2}}\|_{H^{-1}}^{2}= & \|x_{0}^{1}-x_{0}^{2}\|_{H^{-1}}^{2}\\
 & +2\int_{0}^{t}e^{-Kr}(\ve_{1}\D X_{r}^{\ve_{1}}-\ve_{2}\D X_{r}^{\ve_{2}},X_{r}^{\ve_{1}}-X_{r}^{\ve_{2}})_{H^{-1}}dr\\
 & +2\int_{0}^{t}e^{-Kr}(\D\phi^{\ve_{1}}(X_{r}^{\ve_{1}})-\D\phi^{\ve_{2}}(X_{r}^{\ve_{2}}),X_{r}^{\ve_{1}}-X_{r}^{\ve_{2}})_{H^{-1}}dr\\
 & +2\int_{0}^{t}e^{-Kr}(X_{r}^{\ve_{1}}-X_{r}^{\ve_{2}},B(r,X_{r}^{\ve_{1}})-B(r,X_{r}^{\ve_{2}}))_{H^{-1}}dW_{r}\\
 & +\int_{0}^{t}e^{-Kr}\|B(r,X_{r}^{\ve_{1}})-B(r,X_{r}^{\ve_{1}})\|_{L_{2}}^{2}dr\\
 & -K\int_{0}^{t}e^{-Kr}\|X_{r}^{\ve_{1}}-X_{r}^{\ve_{1}}\|_{H^{-1}}^{2}dr.
\end{align*}
Using \eqref{eq:monotone_Y_bound} we note that
\begin{align*}
(\D\phi^{\ve_{1}}(X_{r}^{\ve_{1}})-\D\phi^{\ve_{2}}(X_{r}^{\ve_{2}}),X_{r}^{\ve_{1}}-X_{r}^{\ve_{2}})_{H^{-1}} & =-\int_{\mcO}(\phi^{\ve_{1}}(X_{r}^{\ve_{1}})-\phi^{\ve_{2}}(X_{r}^{\ve_{2}}))(X_{r}^{\ve_{1}}-X_{r}^{\ve_{2}})dx\\
 & \le C(\ve_{1}+\ve_{2})(1+\|X_{r}^{\ve_{1}}\|_{2}^{2}+\|X_{r}^{\ve_{2}}\|_{2}^{2})
\end{align*}
and
\begin{align*}
(\ve_{1}\D X_{r}^{\ve_{1}}-\ve_{2}\D X_{r}^{\ve_{2}},X_{r}^{\ve_{1}}-X_{r}^{\ve_{2}})_{H^{-1}} & =\int_{\mcO}(\ve_{1}X_{r}^{\ve_{1}}-\ve_{2}X_{r}^{\ve_{2}})(X_{r}^{\ve_{1}}-X_{r}^{\ve_{2}})dx\\
 & \le C(\ve_{1}+\ve_{2})(\|X_{r}^{\ve_{1}}\|_{2}^{2}+\|X_{r}^{\ve_{2}}\|_{2}^{2}),
\end{align*}
$dt\otimes d\P$-a.e.. Thus,
\begin{align*}
e^{-Kt}\|X_{t}^{\ve_{1}}-X_{t}^{\ve_{2}}\|_{H^{-1}}^{2}\le & \|x_{0}^{1}-x_{0}^{2}\|_{H^{-1}}^{2}\\
 & +C(\ve_{1}+\ve_{2})\int_{0}^{t}(\|X_{r}^{\ve_{1}}\|_{2}^{2}+\|X_{r}^{\ve_{1}}\|_{2}^{2}+1)dr\\
 & +2\int_{0}^{t}e^{-Kr}(X_{r}^{\ve_{1}}-X_{r}^{\ve_{2}},B(r,X_{r}^{\ve_{1}})-B(r,X_{r}^{\ve_{2}}))_{H^{-1}}dW_{r}\\
 & +C\int_{0}^{t}e^{-Kr}\|X_{r}^{\ve_{1}}-X_{r}^{\ve_{1}}\|_{H^{-1}}^{2}dr\\
 & -K\int_{0}^{t}e^{-Kr}\|X_{r}^{\ve_{1}}-X_{r}^{\ve_{1}}\|_{H^{-1}}^{2}dr.
\end{align*}
Using the Burkholder-Davis-Gundy inequality and \eqref{eq:SFDE_strong_bound} we obtain
\begin{align}
\E\sup_{t\in[0,T]}e^{-Kt}\|X_{t}^{\ve_{1}}-X_{t}^{\ve_{2}}\|_{H^{-1}}^{2}\le & 2E\|x_{0}^{1}-x_{0}^{2}\|_{H^{-1}}^{2}\label{eq:stability_ic_SFDE}\\
 & +C(\ve_{1}+\ve_{2})(\E\|x_{0}^{1}\|_{2}^{2}+\E\|x_{0}^{2}\|_{2}^{2}+1),\nonumber 
\end{align}
for $K>0$ large enough. 

Let now $X^{\ve_{1}},X^{\ve_{2}}$ be strong solutions to \eqref{eq:approx_SVI_constr} with the same initial condition $x_{0}\in L^{2}(\O;L^{2})$. Then \eqref{eq:stability_ic_SFDE} implies 
\begin{align*}
\E\sup_{t\in[0,T]}e^{-Kt}\|X_{t}^{\ve_{1}}-X_{t}^{\ve_{2}}\|_{H^{-1}}^{2} & \le C(\ve_{1}+\ve_{2})(\E\|x_{0}\|_{2}^{2}+1)
\end{align*}
and thus
\[
\E\sup_{t\in[0,T]}\|X_{t}^{\ve_{}}-X_{t}\|_{H^{-1}}^{2}\to0\quad\text{for }\ve\to0
\]
for some $\mcF_{t}$-adapted process $X\in L^{2}(\O;C([0,T];H^{-1}))$ with $X_{0}=x_{0}$. For $x_{0}^{1},x_{0}^{2}\in L^{2}(\O;L^{2})$ taking the limit $\ve\to0$ in \eqref{eq:stability_ic_SFDE} yields
\begin{align}
\E\sup_{t\in[0,T]}e^{-Kt}\|X_{t}^{1}-X_{t}^{2}\|_{H^{-1}}^{2} & \le2E\|x_{0}^{1}-x_{0}^{2}\|_{H^{-1}}^{2}.\label{eq:stability_ic_SFDE-1}
\end{align}
Let now $X^{\ve,n}$ be the unique strong solution (cf. Lemma \ref{lem:fde-l2-test}) to 
\begin{align}
dX_{t}^{\ve,n} & =\ve\D X_{t}^{\ve,n}dt+\D\phi^{\ve}(X_{t}^{\ve,n})dt+B(t,X_{t}^{\ve,n})dW_{t},\label{eq:approx_SVI_constr-1}\\
X_{0}^{\ve,n} & =x_{0}^{n},\nonumber 
\end{align}
for some sequence $x_{0}^{n}\to x_{0}$ in $L^{2}(\O;H^{-1})$ with $x_{0}^{n}\in L^{2}(\O;L^{2})$. Using $ $\eqref{eq:stability_ic_SFDE} and \eqref{eq:stability_ic_SFDE-1} we obtain the existence of a sequence of $\mcF_{t}$-adapted processes $X^{n}\in L^{2}(\O;C([0,T];H^{-1}))$ with $X_{0}^{n}=x_{0}^{n}$ and an $\mcF_{t}$-adapted process $X\in L^{2}(\O;C([0,T];H^{-1}))$ with $X_{0}=x_{0}$ such that
\[
\E\sup_{t\in[0,T]}\|X_{t}^{\ve,n}-X_{t}^{n}\|_{H^{-1}}^{2}\to0\quad\text{for }\ve\to0
\]
and 
\[
\E\sup_{t\in[0,T]}\|X_{t}^{n}-X_{t}\|_{H^{-1}}^{2}\to0\quad\text{for }n\to\infty.
\]
Let now $G,Z$ be as in Definition \ref{def:SFDE_gen}. Itô's formula implies
\begin{align*}
\E\|X_{t}^{\ve,n}-Z_{t}\|_{H^{-1}}^{2}= & \E\|x_{0}^{n}-Z_{0}\|_{H^{-1}}^{2}\\
 & +2\E\int_{0}^{t}(\ve\D X_{t}^{\ve,n}+\D\phi^{\ve}(X_{t}^{\ve,n})-G_{r},X_{r}^{\ve,n}-Z_{r})_{H^{-1}}dr\\
 & +\E\int_{0}^{t}\|B(r,X_{r}^{\ve,n})-B(r,Z_{r})\|_{L_{2}}^{2}dr.
\end{align*}
For $v\in H^{-1}$ we set
\[
\vp^{\ve}(v)=\begin{cases}
\int_{\mcO}\psi^{\ve}(v)dx & ,\ v\in L^{2}\\
+\infty & ,\ \text{otherwise.}
\end{cases}
\]
Using convexity of $\psi^{\ve}$ we have
\[
(\D\phi^{\ve}(X_{r}^{\ve,n}),X_{r}^{\ve,n}-Z_{r})_{H^{-1}}+\vp^{\ve}(X_{r}^{\ve,n})\le\vp^{\ve}(Z_{r})
\]
$dt\otimes\P$-a.e.. Moreover,
\begin{align*}
(\ve\D X_{r}^{\ve,n},X_{r}^{\ve,n}-Z_{r})_{H^{-1}} & \le\ve\|\D X_{r}^{\ve,n}\|_{H^{-1}}\|X_{r}^{\ve,n}-Z_{r}\|_{H^{-1}}\\
 & \le\ve^{\frac{4}{3}}\|\D X_{r}^{\ve,n}\|_{H^{-1}}^{2}+\ve^{\frac{2}{3}}\|X_{r}^{\ve,n}-Z_{r}\|_{H^{-1}}^{2}
\end{align*}
$dt\otimes\P$-a.e.. Hence,
\begin{align}
 & \E\|X_{t}^{\ve,n}-Z_{t}\|_{H^{-1}}^{2}+2\E\int_{0}^{t}\vp^{\ve}(X_{r}^{\ve,n})dr\nonumber \\
\le & \E\|x_{0}^{n}-Z_{0}\|_{H^{-1}}^{2}+2\E\int_{0}^{t}\vp^{\ve}(Z_{r})dr\label{eq:svi-2}\\
 & -2\E\int_{0}^{t}(G_{r},X_{r}^{\ve,n}-Z_{r})_{H^{-1}}dr+C\E\int_{0}^{t}\|X_{r}^{\ve,n}-Z_{r}\|_{H^{-1}}^{2}dr\nonumber \\
 & +2\E\int_{0}^{t}\ve^{\frac{4}{3}}\|\D X_{r}^{\ve,n}\|_{H^{-1}}^{2}+\ve^{\frac{2}{3}}\|X_{r}^{\ve,n}-Z_{r}\|_{H^{-1}}^{2}dr.\nonumber 
\end{align}
Due to the definition of $\vp^{\ve}$ and \eqref{eq:yosida_convergence-2} we have
\begin{align}
|\vp^{\ve}(v)-\vp(v)| & \le C\ve(1+\vp(v))\label{eq:SVI_phi_error}\\
 & \le C\ve(1+\|v\|_{2}^{2}),\nonumber 
\end{align}
for all $v\in L^{2}$. Hence,
\begin{align*}
\E\int_{0}^{t}\vp^{\ve}(X_{r}^{\ve,n})dr & \ge\E\int_{0}^{t}\vp(X_{r}^{\ve,n})dr-C\ve\E\int_{0}^{t}(1+\|X_{r}^{\ve,n}\|_{2}^{2})dr
\end{align*}
and thus
\[
\liminf_{\ve\to0}\E\int_{0}^{t}\vp^{\ve}(X_{r}^{\ve,n})dr\ge\E\int_{0}^{t}\vp(X_{r}^{n})dr.
\]
Using $\vp^{\ve}\le\vp$, due to \eqref{eq:MY-ineq}, and \eqref{eq:SFDE_strong_bound} we may thus let $\ve\to0$ and then $n\to\infty$ in \eqref{eq:svi-2} to obtain 
\begin{align*}
 & \E\|X_{t}-Z_{t}\|_{H^{-1}}^{2}+2\E\int_{0}^{t}\vp(X_{r})dr\\
\le & \E\|x_{0}-Z_{0}\|_{H^{-1}}^{2}+2\E\int_{0}^{t}\vp(Z_{r})dr\\
 & -2\E\int_{0}^{t}(G_{r},X_{r}-Z_{r})_{H^{-1}}dr+C\E\int_{0}^{t}\|X_{r}-Z_{r}\|_{H^{-1}}^{2}dr,
\end{align*}
where $\vp(X)\in L^{1}([0,T]\times\O)$ follows from lower-semicontinuity of $\vp$ on $H^{-1}$.

\textbf{Step 2:} Uniqueness

Let $X$ be an SVI solution to \eqref{eq:SVI-SFDE} and let $Y^{\ve,n}$ be the (strong) solution to \eqref{eq:approx_SVI_constr-1} with initial condition $y_{0}^{n}\in L^{2}(\O;L^{2})$ satisfying $y_{0}^{n}\to y_{0}$ in $L^{2}(\O;H^{-1})$. Then \eqref{eq:SFDE_gen_soln} with $Z=Y^{\ve,n}$ and $G=\ve\D Y^{\ve,n}+\D\phi^{\ve}(Y^{\ve,n})$ yields 
\begin{align}
 & \E\|X_{t}-Y_{t}^{\ve,n}\|_{H^{-1}}^{2}+2\E\int_{0}^{t}\vp(X_{r})dr\nonumber \\
\le & \E\|x_{0}-y_{0}^{n}\|_{H^{-1}}^{2}+2\E\int_{0}^{t}\vp(Y_{r}^{\ve,n})dr\label{eq:approx_SVI}\\
 & -2\E\int_{0}^{t}(\ve\D Y_{r}^{\ve,n}+\D\phi^{\ve}(Y_{r}^{\ve,n}),X_{r}-Y_{r}^{\ve,n})_{H^{-1}}dr\nonumber \\
 & +C\E\int_{0}^{t}\|X_{r}-Y_{r}^{\ve,n}\|_{H^{-1}}^{2}dr.\nonumber 
\end{align}
For $x\in L^{2}$ we have
\[
-(\D\phi^{\ve}(Y^{\ve,n}),x-Y^{\ve,n})_{H^{-1}}+\vp^{\ve}(Y^{\ve,n})\le\vp^{\ve}(x)\quad dt\otimes d\P-\text{a.e.}.
\]
Due to \eqref{eq:SVI_phi_error} we obtain
\[
-(\D\phi^{\ve}(Y^{\ve,n}),x-Y^{\ve,n})_{H^{-1}}+\vp(Y^{\ve,n})\le\vp(x)+C\ve(1+\vp(Y^{\ve,n}))\quad dt\otimes d\P-\text{a.e.}.
\]
Since $\vp(X)\in L^{1}([0,T]\times\O)$ and since $\vp$ is the lower-semicontinuous hull of $\vp_{|L^{2}}$ on $H^{-1}$, for a.e. $(t,\o)\in[0,T]\times\O$ we can find a sequence $x^{m}\in L^{2}$ such that $x^{m}\to X_{t}(\o)$ in $H^{-1}$ and $\vp(x^{m})\to\vp(X_{t}(\o))$. Hence,
\[
-(\D\phi^{\ve}(Y^{\ve,n}),X-Y^{\ve,n})_{H^{-1}}+\vp(Y^{\ve,n})\le\vp(X)+C\ve(1+\vp(Y^{\ve,n}))\quad dt\otimes d\P-\text{a.e.}
\]
and \eqref{eq:approx_SVI} implies
\begin{align*}
\E\|X_{t}-Y_{t}^{\ve,n}\|_{H^{-1}}^{2} & \le\E\|x_{0}-y_{0}^{n}\|_{H^{-1}}^{2}\\
 & +2\E\int_{0}^{t}\ve^{\frac{4}{3}}\|\D Y_{r}^{\ve,n}\|_{H^{-1}}^{2}+\ve^{\frac{2}{3}}\|X_{r}-Y_{r}^{\ve,n}\|_{H^{-1}}^{2}dr\\
 & +C\E\int_{0}^{t}\|X_{r}-Y_{r}^{\ve,n}\|_{H^{-1}}^{2}dr+C\ve\E\int_{0}^{t}(1+\vp(Y_{r}^{\ve,n}))dr.
\end{align*}
Taking $\ve\to0$ then $n\to\infty$ yields
\begin{align*}
\E\|X_{t}-Y_{t}\|_{H^{-1}}^{2}\le & \E\|x_{0}-y_{0}\|_{H^{-1}}^{2}+C\E\int_{0}^{t}\|X_{r}-Y_{r}\|_{H^{-1}}^{2}dr,
\end{align*}
which by Gronwall's inequality concludes the proof. 
\end{proof}

\section{Regularity and Strong solutions\label{sec:Strong-solutions}}

We consider SPDE of the form
\begin{align}
dX_{t} & \in\D(|X_{t}|^{m-1}X_{t})dt+\sum_{k=1}^{\infty}g^{k}X_{t}d\b_{t}^{k},\label{eq:SFDE-mult}\\
X_{0} & =x_{0}\nonumber 
\end{align}
with $m\in[0,1]$ and zero Dirichlet boundary conditions on a smooth, bounded domain $\mcO\subseteq\R^{d}$, in arbitrary dimension $d\in\N$. Here, $\b^{k}$ are independent Brownian motions on a normal, filtered probability space $(\O,\mcF,(\mcF_{t}){}_{t\ge0},\P)$ and
\begin{enumerate}
\item [(B)] $g^{k}\in C^{1}(\bar{\mcO})$ with
\[
\sum_{k=1}^{\infty}\|g^{k}\|_{C^{1}(\bar{\mcO})}^{2}<\infty.
\]

\end{enumerate}
For $v\in H^{-1}$ we set
\[
B(v)(h)=\sum_{k=1}^{\infty}g^{k}v(e_{k},h)_{H^{-1}},
\]
where $e_{k}\in H^{-1}$ is an orthonormal basis of $H^{-1}$. Then $B:H^{-1}\to L_{2}(H^{-1},H^{-1})$ is Lipschitz continuous, i.e.
\begin{align*}
\|B(u)-B(v)\|_{L_{2}(H^{-1},H^{-1})}^{2} & =\sum_{k=1}^{\infty}\|g^{k}(u-v)\|_{H^{-1}}^{2}\\
 & \le\sum_{k=1}^{\infty}\|g^{k}\|_{C^{1}(\bar{\mcO})}^{2}\|u-v\|_{H^{-1}}^{2},\quad\text{for all }u,v\in H^{-1}
\end{align*}
and
\begin{align*}
\|B(v)\|_{L_{2}(H^{-1},L^{2})}^{2} & =\sum_{k=1}^{\infty}\|g^{k}v\|_{L^{2}}^{2}\\
 & \le\sum_{k=1}^{\infty}\|g^{k}\|_{C^{0}(\bar{\mcO})}^{2}\|v\|_{L^{2}}^{2},\quad\text{for all }v\in L^{2}.
\end{align*}
We define $\psi,\phi,\vp$ and $\mcM,L^{m+1}\cap H^{-1},\mcM\cap H^{-1}$ as in Section \ref{sec:SVI} and rewrite \eqref{eq:SFDE-mult} in the \textit{relaxed} form
\begin{align}
dX_{t} & \in-\partial\vp(X_{t})dt+B(X_{t})dW_{t},\label{eq:SFDE_general_form}\\
X_{0} & =x_{0}.\nonumber 
\end{align}
(Generalized) strong solutions to \eqref{eq:SFDE-mult} are then defined as in Definition \ref{def:strong_soln-general} (with $H=H^{-1}$). 

For initial conditions $x_{0}\in L^{2}(\Omega;H^{-1})$ satisfying $\E\vp(x_{0})<\infty$ we prove the existence of strong solutions to \eqref{eq:SFDE-mult}. Moreover, we will prove regularizing properties with respect to the initial condition due to the subgradient structure of the drift. This allows to characterize solutions for initial conditions $x_{0}\in L^{2}(\Omega;H^{-1})$ as generalized strong solutions.
\begin{thm}
\label{thm:FDE}Let $x_{0}\in L^{2}(\Omega;H^{-1})$. 
\begin{enumerate}
\item There is a unique generalized strong solution $(X,\eta)$ to \eqref{eq:SFDE-mult} and $X$ satisfies
\begin{align*}
 & \E t\vp(X_{t})+\E\int_{0}^{t}r\|\eta_{r}\|_{H^{-1}}^{2}dr\le C\left(\E\|x_{0}\|_{H^{-1}}^{2}+1\right).
\end{align*}

\item If \textup{$\E\vp(x_{0})<\infty$,} then there is a unique strong solution $(X,\eta)$ to \eqref{eq:SFDE-mult} satisfying
\[
\E\vp(X_{t})+\E\int_{0}^{t}\|\eta_{r}\|_{H^{-1}}^{2}dr\le\E\vp(x_{0})+C,
\]

\end{enumerate}
The (generalized) strong solution $ $$(X,\eta)$ coincides with the limit solution constructed in \cite{GT11}.
\end{thm}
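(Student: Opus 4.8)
The plan is to construct the (generalized) strong solution $(X,\eta)$ as a limit of strong solutions to non-singular Lipschitz approximations of \eqref{eq:SFDE-mult}, and to transfer all quantitative bounds to the limit by lower-semicontinuity and weak compactness. The only place where the structure of \eqref{eq:SFDE-mult} really enters is an a~priori energy estimate in which the \emph{linear vanishing of the noise at the origin} compensates the singularity of $\phi$ at $0$; this constraint also forces a particular choice of approximation, different from the one used in Section~\ref{sec:SVI}.

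\textbf{Approximation and existence at level $\lambda$.} First I would replace $\phi$ by a Lipschitz, non-degenerate approximation $\phi^\lambda=(\psi^\lambda)'$, $\lambda\in(0,1]$, as provided by Appendix~\ref{sec:Moreau-Yosida} (a Moreau--Yosida-type regularization, possibly with a small additional elliptic term), chosen so that $\psi^\lambda\uparrow\psi$, $\phi^\lambda\to\phi$, and --- crucially --- the structural bound $(\phi^\lambda)'(r)\,r^2\le C\psi(r)$ holds up to lower-order terms vanishing as $\lambda\to0$. For $x_0^\lambda\in L^2(\O;L^2)$ with $x_0^\lambda\to x_0$ in $L^2(\O;H^{-1})$ (and, for part~(2), chosen so that also $\E\!\int_\mcO\psi^\lambda(x_0^\lambda)\,dx\to\E\vp(x_0)$), the equation $dX_t^\lambda=\D\phi^\lambda(X_t^\lambda)\,dt+B(X_t^\lambda)\,dW_t$, $X_0^\lambda=x_0^\lambda$, has a unique strong solution $X^\lambda\in L^2(\O;C([0,T];H^{-1}))$ with $\eta^\lambda:=\D\phi^\lambda(X^\lambda)\in L^2(\O\times[0,T];H^{-1})$, by the existence lemma of the appendix (in the spirit of Lemma~\ref{lem:fde-l2-test}): the drift is the $H^{-1}$-subgradient of the convex, proper, lower-semicontinuous functional $v\mapsto\int_\mcO\psi^\lambda(v)\,dx$, and $B$ is Lipschitz.

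\textbf{Uniform estimates.} It\^o's formula for $\tfrac12\|X_t^\lambda\|_{H^{-1}}^2$, together with $(\eta_r^\lambda,X_r^\lambda)_{H^{-1}}=-\int_\mcO\phi^\lambda(X_r^\lambda)X_r^\lambda\,dx\le0$, the bound $\|B(v)\|_{L_2(H^{-1},H^{-1})}^2\le C\|v\|_{H^{-1}}^2$, Burkholder--Davis--Gundy and Gronwall, gives $\E\sup_{t\le T}\|X_t^\lambda\|_{H^{-1}}^2+\E\int_0^T\!\int_\mcO\psi^\lambda(X_r^\lambda)\,dx\,dr\le C\,\E\|x_0\|_{H^{-1}}^2$ uniformly in $\lambda$; in particular $\vp(X)\in L^1([0,T]\times\O)$ will hold in the limit. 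Next I would apply It\^o's formula (in the form valid for subgradient SPDEs) to $r\mapsto r\int_\mcO\psi^\lambda(X_r^\lambda)\,dx$, and to $r\mapsto\int_\mcO\psi^\lambda(X_r^\lambda)\,dx$ when $\E\vp(x_0)<\infty$: the subgradient structure produces the good term $-\int_0^t r\|\eta_r^\lambda\|_{H^{-1}}^2\,dr$, the martingale part has zero mean after localization, and the It\^o correction equals $\tfrac12\int_0^t r\sum_k\int_\mcO(\phi^\lambda)'(X_r^\lambda)\,(g^kX_r^\lambda)^2\,dx\,dr$, which by the structural bound and (B) is controlled by $C\int_0^t r\,(1+\int_\mcO\psi^\lambda(X_r^\lambda)\,dx)\,dr$; here it is precisely the factor $(g^k)^2$, vanishing where $X_r^\lambda$ vanishes, that absorbs $(\phi^\lambda)'\sim|\cdot|^{m-1}$. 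Combined with the previous estimate and Gronwall this yields, uniformly in $\lambda$, $\E\,t\int_\mcO\psi^\lambda(X_t^\lambda)\,dx+\E\int_0^t r\|\eta_r^\lambda\|_{H^{-1}}^2\,dr\le C(\E\|x_0\|_{H^{-1}}^2+1)$, and the non-regularizing variant $\E\int_\mcO\psi^\lambda(X_t^\lambda)\,dx+\E\int_0^t\|\eta_r^\lambda\|_{H^{-1}}^2\,dr\le\E\vp(x_0)+C$ when $\E\vp(x_0)<\infty$.

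\textbf{Passage to the limit, uniqueness, identification, and the main obstacle.} A Cauchy estimate comparing two approximations (It\^o for $\|X_t^{\lambda_1}-X_t^{\lambda_2}\|_{H^{-1}}^2$, with monotonicity of $\phi^\lambda$ absorbing the leading term and the mismatch $\phi^{\lambda_1}-\phi^{\lambda_2}$ estimated through the bounds just obtained) will give $X^\lambda\to X$ in $L^2(\O;C([0,T];H^{-1}))$; the uniform bounds give $\eta^\lambda\rightharpoonup\eta$ weakly in $L^2(\O\times[\delta,T];H^{-1})$ for each $\delta>0$ (on all of $[0,T]$ if $\E\vp(x_0)<\infty$), and since $\|\eta_r^\lambda\|_{H^{-1}}=\|\phi^\lambda(X_r^\lambda)\|_{H_0^1}$ also $\phi^\lambda(X^\lambda)$ is bounded in $L^2(\O\times[\delta,T];H_0^1)$, while $B(X^\lambda)\to B(X)$ strongly by the Lipschitz property. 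Passing to the limit in the equation gives $X_t=x_0+\int_0^t\eta_r\,dr+\int_0^t B(X_r)\,dW_r$, and a Minty-type monotonicity argument in $H^{-1}$ (passing to the limit in the subgradient inequality for $\int_\mcO\psi^\lambda(\cdot)\,dx$, using the resolvents $J_\lambda X^\lambda\to X$ to get $\liminf_\lambda\int_\mcO\psi^\lambda(X^\lambda)\,dx\ge\vp(X)$) identifies $\eta_r\in-\partial\vp(X_r)$, so that $(X,\eta)$ is a (generalized) strong solution to \eqref{eq:SFDE_general_form} and the stated estimates follow by lower-semicontinuity. Uniqueness together with the contraction $\sup_t\E\|X_t-Y_t\|_{H^{-1}}^2\le C\,\E\|x_0-y_0\|_{H^{-1}}^2$ follows from It\^o for $\|X_t-Y_t\|_{H^{-1}}^2$, monotonicity of $\partial\vp$, the Lipschitz bound on $B$ and Gronwall. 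Finally, since any such $(X,\eta)$ with $\vp(X)\in L^1([0,T]\times\O)$ is an SVI solution by Remark~\ref{rmk:varn_sol-1}, Theorem~\ref{thm:periodic_main-1} identifies it with the unique SVI solution and hence with the limit solution of \cite{GT11}. I expect the hardest step to be the uniform control of the It\^o correction in the energy estimate: it is what forces the careful choice of $\phi^\lambda$ (an arbitrary Lipschitz regularization need not satisfy $(\phi^\lambda)'(r)\,r^2\le C\psi(r)$) and it is precisely where the decay of the multiplicative noise at $0$ is indispensable; a secondary difficulty is the in-the-limit identification $\eta\in-\partial\vp(X)$, since $\vp$ is only lower-semicontinuous and not coercive on $H^{-1}$.
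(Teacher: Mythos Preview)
Your overall strategy matches the paper's: approximate $\phi$ by a non-singular Lipschitz nonlinearity, derive the key energy estimate by applying It\^o's formula to $t\mapsto t\int_\mcO\psi^\lambda(X_t)\,dx$, control the It\^o correction via the structural inequality $(\phi^\lambda)'(r)r^2\le C\psi(r)$ (this is exactly where the linear decay of the noise at $0$ enters), and then pass to the limit with a Cauchy estimate in $H^{-1}$ plus a Minty-type identification of $\eta\in-\partial\vp(X)$. You also correctly single out the It\^o correction as the hardest step.

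Two technical points, however, are handled differently in the paper and are not merely cosmetic. First, the approximation is \emph{not} the Moreau--Yosida regularization of Appendix~\ref{sec:Moreau-Yosida}: the paper uses instead the smooth family $\psi^\delta(r)=\frac{1}{m+1}\big((r^2+\delta)^{(m+1)/2}-\delta^{(m+1)/2}\big)$, so that $\phi^\delta\in C^2$ with Lipschitz $\dot\phi^\delta$ and the structural bounds \eqref{eq:psi-cdt} hold with constants independent of $\delta$. The Moreau--Yosida $\phi^\ve$ is only Lipschitz (for $m=0$ it is piecewise linear), so $(\phi^\ve)'$ is not classically defined and the second-order It\^o computation you sketch does not go through directly. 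The paper in fact emphasizes (see the introduction) that different approximations are needed in Sections~\ref{sec:SVI} and~\ref{sec:Strong-solutions}. Second, even with $\psi^\delta\in C^3$ and the additional viscosity $\ve\Delta$, the functional $\vp^{\ve}(v)=\frac\ve2\|v\|_2^2+\int_\mcO\psi^\delta(v)\,dx$ is $C^2$ only on $H_0^1\cap H^{2k+1}$, not on $H^{-1}$; the paper therefore inserts a further regularization $G^\lambda$ (iterated resolvents of $-\Delta$), applies It\^o to $\vp^{\ve}\circ G^\lambda$, and only then removes $\lambda$. Your parenthetical ``in the form valid for subgradient SPDEs'' hides precisely this layer.

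In summary, the architecture of your proof is right, and the identification of the key difficulty is spot on; but the paper's implementation requires a three-level approximation ($\delta$ for the nonlinearity, $\ve$ for viscosity, then the initial condition), with the specific smooth choice of $\psi^\delta$ and the extra $G^\lambda$-smoothing being essential to make the It\^o step rigorous.
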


The proof of Theorem \ref{thm:FDE} proceeds in several steps. In particular, the singularity of $\psi$ causes the need for a non-singular regularization of $\psi$. This approximation has to be carefully chosen in order to obtain uniform bounds. Passing to the limit will in turn rely on Mosco-convergence of the regularized potentials. We will first consider the case of non-singular potentials, keeping careful track of the arising constants in Section \ref{sec:non-singular}, the limit will then be taken in Section \ref{sec:proof}.

\subsubsection{Non-singular potential $\psi$\label{sec:non-singular}}

In this section we restrict to the approximating case of a smooth, non-singular nonlinearity $\psi.$ We assume that $\psi\in C^{3}(\R;\R_{+})$ is convex with Lipschitz continuous derivatives $\phi=\dot{\psi}$, $\dot{\phi}$ satisfying $\psi(0)=\phi(0)=0$ and 
\begin{align}
\phi(r)r & \ge c_{\psi}\psi(r)-C_{\psi}\label{eq:psi-cdt}\\
\dot{\phi}(r)r^{2} & \le C_{\psi}\psi(r),\quad\forall r\in\R,\nonumber 
\end{align}
for some constants $C_{\psi},c_{\psi}>0$.

We consider the following non-degenerate, non-singular approximation of \eqref{eq:SFDE-mult} (cf. Appendix \ref{sec:app-non-deg}): 
\begin{align}
dX_{t}^{\ve} & =\ve\D X_{t}^{\ve}dt+\D\phi(X_{t}^{\ve})dt+B(X_{t}^{\ve})dW_{t},\label{eq:SFDE-non-deg-on-sing}\\
X_{0}^{\ve} & =x_{0}.\nonumber 
\end{align}
For $v\in H^{-1}$ we define
\[
\vp^{\ve}(v):=\begin{cases}
\frac{\ve}{2}\int_{\mcO}|v|^{2}dx+\int_{\mcO}\psi(v)dx, & \text{for }v\in L^{2}\\
+\infty, & \text{otherwise.}
\end{cases}
\]
Note that $\vp^{\ve}\in C^{1}(L^{2})$ with Lipschitz continuous derivative given by
\begin{align*}
D\vp^{\ve}(v)(h) & =\ve\int_{\mcO}vhdx+\int_{\mcO}\phi(v)hdx.
\end{align*}
To check the claimed continuity we note that
\begin{align*}
D\vp^{\ve}(v)(h)-D\vp^{\ve}(w)(h) & =\ve\int_{\mcO}(v-w)hdx+\int_{\mcO}(\phi(v)-\phi(w))hdx\\
 & \le\ve\|v-w\|_{2}\|h\|_{2}+\|\phi\|_{Lip}\|v-w\|_{2}\|h\|_{2}\\
 & \lesssim(\ve+1)\|v-w\|_{2}\|h\|_{2}.
\end{align*}
Moreover, for $k\in\N$ large enough, we have $\vp^{\ve}\in C^{2}(H_{0}^{1}\cap H^{2k+1})$ with Lipschitz continuous second derivative given by
\begin{align*}
D^{2}\vp^{\ve}(v)(g,h) & =\ve\int_{\mcO}hgdx+\int_{\mcO}h\dot{\phi}(v)gdx.
\end{align*}
Indeed:
\begin{align*}
D^{2}\vp^{\ve}(v)(g,h)-D^{2}\vp^{\ve}(w)(g,h) & =\int_{\mcO}h\left(\dot{\phi}(v)-\dot{\phi}(w)\right)gdx\\
 & \le\|h\|_{3}\|\dot{\phi}(v)-\dot{\phi}(w)\|_{3}\|g\|_{3}\\
 & \le\|\dot{\phi}\|_{Lip}\|h\|_{H^{2k+1}}\|v-w\|_{H^{2k+1}}\|g\|_{H^{2k+1}},
\end{align*}
where we used the Sobolev embedding $H^{2k+1}\hookrightarrow L^{3}$ for $k\in\N$ large enough. Moreover, $\vp^{\ve}$ is a convex, lower-semicontinuous function on $H^{-1}$ with subgradient given by
\[
A^{\ve}(v):=-\partial\vp^{\ve}(v)=\ve\D v+\D\phi(v)\in H^{-1},\quad\text{for }v\in H_{0}^{1}.
\]
Hence, we may write \eqref{eq:SFDE-non-deg-on-sing} as 
\begin{align*}
dX_{t}^{\ve} & =-\partial\vp^{\ve}(X_{t}^{\ve})dt+B(X_{t}^{\ve})dW_{t}.
\end{align*}
(Generalized) strong solutions to \eqref{eq:SFDE-non-deg-on-sing} are then defined according to Definition \ref{def:strong_soln-general}. By \cite{PR07}, for each $x_{0}\in L^{2}(\O;H^{-1})$ there is a unique variational solution $X^{\ve}$ to \eqref{eq:SFDE-non-deg-on-sing} with respect to the Gelfand triple
\[
L^{2}\hookrightarrow H^{-1}\hookrightarrow(L^{2})^{*}.
\]
Due to Lemma \ref{lem:fde-l2-test}, $X^{\ve}$ is a strong solution if $x_{0}\in L^{2}(\O;L^{2})$. 
\begin{lem}
\label{lem:H-bound-1}Let $x_{0}\in L^{2}(\O;L^{2})$. For each $\ve>0$ we have $\vp^{\ve}(X^{\ve})\in L^{1}([0,T]\times\Omega)$ with
\[
\E\int_{0}^{T}\vp^{\ve}(X_{r}^{\ve})dr\le C(\E\|x_{0}\|_{H^{-1}}^{2}+1),
\]
for some constant $C$ independent of $\ve>0$ and depending on $\psi$ via the constants $c_{\psi},C_{\psi}$ only. \end{lem}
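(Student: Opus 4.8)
The plan is to run an energy estimate for the approximating equation \eqref{eq:SFDE-non-deg-on-sing} in the $H^{-1}$-norm and to exploit the coercivity-type condition \eqref{eq:psi-cdt}. Since $x_{0}\in L^{2}(\O;L^{2})$, the process $X^{\ve}$ is a strong (in particular variational) solution, so It\^o's formula applies to $t\mapsto\|X_{t}^{\ve}\|_{H^{-1}}^{2}$ in the Gelfand triple $L^{2}\hookrightarrow H^{-1}\hookrightarrow(L^{2})^{*}$. Using $(v,\D w)_{H^{-1}}=-(v,w)_{2}$ for $v\in L^{2}$ and $w\in H_{0}^{1}$ (applied to $w=X_{r}^{\ve}$ and to $w=\phi(X_{r}^{\ve})$, the latter lying in $H_{0}^{1}$ since $\phi$ is Lipschitz with $\phi(0)=0$ and $X_{r}^{\ve}\in H_{0}^{1}$ for a.e.\ $r$), the drift of \eqref{eq:SFDE-non-deg-on-sing} contributes $-2\ve\|X_{r}^{\ve}\|_{2}^{2}-2\int_{\mcO}\phi(X_{r}^{\ve})X_{r}^{\ve}\,dx$, so that, after taking expectations and using that the stochastic term is a martingale (a routine localization at $\tau_{N}=\inf\{t:\|X_{t}^{\ve}\|_{H^{-1}}>N\}$, together with $X^{\ve}\in L^{2}(\O;C([0,T];H^{-1}))$), one obtains the energy identity
\[
\E\|X_{t}^{\ve}\|_{H^{-1}}^{2}+2\E\int_{0}^{t}\Big(\ve\|X_{r}^{\ve}\|_{2}^{2}+\int_{\mcO}\phi(X_{r}^{\ve})X_{r}^{\ve}\,dx\Big)dr=\E\|x_{0}\|_{H^{-1}}^{2}+\E\int_{0}^{t}\|B(X_{r}^{\ve})\|_{L_{2}(H^{-1},H^{-1})}^{2}\,dr .
\]

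First I would extract the plain a priori bound. Since $\psi\ge0$, the first line of \eqref{eq:psi-cdt} yields $\phi(r)r\ge-C_{\psi}$, so the drift term on the left is $\ge-2C_{\psi}|\mcO|t$; combining this with $\|B(v)\|_{L_{2}(H^{-1},H^{-1})}^{2}\le C\|v\|_{H^{-1}}^{2}$ (linearity of $B$ and the Lipschitz property of $B$) and Gronwall's lemma gives
\[
\sup_{t\in[0,T]}\E\|X_{t}^{\ve}\|_{H^{-1}}^{2}\le C\big(\E\|x_{0}\|_{H^{-1}}^{2}+1\big),
\]
with $C$ independent of $\ve$ and depending on $\psi$ only through $C_{\psi}$. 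In particular $\E\int_{0}^{T}\|X_{r}^{\ve}\|_{H^{-1}}^{2}\,dr\le CT(\E\|x_{0}\|_{H^{-1}}^{2}+1)$.

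Next, rather than discarding the drift term in the energy identity, I would keep it and bound it below. By the first line of \eqref{eq:psi-cdt} and $\ve\|v\|_{2}^{2}\ge\tfrac{\ve}{2}\|v\|_{2}^{2}$,
\[
\ve\|v\|_{2}^{2}+\int_{\mcO}\phi(v)v\,dx\ \ge\ \ve\|v\|_{2}^{2}+c_{\psi}\int_{\mcO}\psi(v)\,dx-C_{\psi}|\mcO|\ \ge\ \min(1,c_{\psi})\,\vp^{\ve}(v)-C_{\psi}|\mcO| .
\]
Inserting this into the energy identity at $t=T$, dropping $\E\|X_{T}^{\ve}\|_{H^{-1}}^{2}\ge0$, and using the a priori bound to control $\E\int_{0}^{T}\|X_{r}^{\ve}\|_{H^{-1}}^{2}\,dr$, one arrives at
\[
2\min(1,c_{\psi})\,\E\int_{0}^{T}\vp^{\ve}(X_{r}^{\ve})\,dr\le\E\|x_{0}\|_{H^{-1}}^{2}+2C_{\psi}|\mcO|T+CT\big(\E\|x_{0}\|_{H^{-1}}^{2}+1\big),
\]
and dividing by $2\min(1,c_{\psi})>0$ yields the claimed estimate. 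Since $\vp^{\ve}\ge0$, this also gives $\vp^{\ve}(X^{\ve})\in L^{1}([0,T]\times\O)$; and the final constant is $\ve$-independent and depends on $\psi$ only through $c_{\psi}$ and $C_{\psi}$, all other constants coming from $B$, $|\mcO|$ and $T$.

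There is no genuine obstacle here: the argument is a standard coercivity/energy estimate, and the only points requiring care are (i) the bookkeeping of constants, so that the final $C$ does not depend on $\ve$ and enters $\psi$ only through $c_{\psi},C_{\psi}$; (ii) the justification that the $H^{-1}$-stochastic integral has vanishing expectation (handled by the localization mentioned above, then monotone/dominated convergence); and (iii) the observation that, although $x_{0}\in L^{2}(\O;L^{2})$ is needed for $X^{\ve}$ to be a strong solution, the resulting bound involves only $\E\|x_{0}\|_{H^{-1}}^{2}$ — this is exactly the regularizing feature of the subgradient structure that will be propagated to the limit $\ve\to0$.
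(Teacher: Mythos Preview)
Your proof is correct and follows essentially the same approach as the paper's: an $H^{-1}$ energy estimate combined with the coercivity bound $(A^{\ve}(v),v)_{H^{-1}}=-\int_{\mcO}(\ve|v|^{2}+\phi(v)v)\,dx\le -c\,\vp^{\ve}(v)+C$ coming from the first line of \eqref{eq:psi-cdt}, and the Lipschitz bound on $B$. The only cosmetic difference is that the paper applies It\^o's formula to $e^{-Kt}\|X_{t}^{\ve}\|_{H^{-1}}^{2}$ and chooses $K$ large to absorb the quadratic noise term in one stroke, whereas you run the plain It\^o formula, extract the a~priori bound on $\sup_{t}\E\|X_{t}^{\ve}\|_{H^{-1}}^{2}$ via Gronwall, and then feed it back in---these two devices are equivalent.
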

\begin{proof}
Note that, using \eqref{eq:psi-cdt} 
\begin{align*}
(A^{\ve}(v),v)_{H^{-1}} & =-\int_{\mcO}\left(\ve|v|^{2}+\phi(v)v\right)dx\\
 & \le-\int_{\mcO}\left(\ve|v|^{2}+c\psi(v)-C\right)dx\\
 & \le-c\vp^{\ve}(v)+C,
\end{align*}
for all $v\in H_{0}^{1}$.  By Itô's formula we have
\begin{align*}
 & \E e^{-Kt}\|X_{t}^{\ve}\|_{H^{-1}}^{2}\\
= & \E\|x_{0}\|_{H}^{2}+2\E\int_{0}^{t}e^{-Kr}(A^{\ve}(X_{r}^{\ve}),X_{r}^{\ve})_{H^{-1}}+e^{-Kr}\|B(X_{r}^{\ve})\|_{L_{2}(U,H^{-1})}^{2}dr\\
 & -K\int_{0}^{t}e^{-Kr}\|X_{r}^{\ve}\|_{H^{-1}}^{2}dr\\
\le & \E\|x_{0}\|_{H^{-1}}^{2}-2\E\int_{0}^{t}ce^{-Kr}\vp^{\ve}(X_{r}^{\ve})+Ce^{-Kr}\|X_{r}^{\ve}\|_{H^{-1}}^{2}dr\\
 & -K\int_{0}^{t}e^{-Kr}\|X_{r}^{\ve}\|_{H^{-1}}^{2}dr+C.
\end{align*}
Choosing $K$ large enough yields the claim.
\end{proof}

Based on the strong solution property of $X^{\ve}$ we derive the key estimate in the following
\begin{lem}
\label{lem:strong_approx-2}Let $x_{0}\in L^{2}(\O;L^{2})$. For all $\ve>0$ we have
\begin{align*}
 & \E t\vp^{\ve}(X_{t}^{\ve})+\E\int_{0}^{t}r\|\ve X_{r}^{\ve}+\phi(X_{r}^{\ve})\|_{H_{0}^{1}}^{2}dr\le C\left(\E\|x_{0}\|_{H^{-1}}^{2}+1\right).
\end{align*}
and
\begin{equation}
\E\vp^{\ve}(X_{t}^{\ve})+\E\int_{0}^{t}\|\ve X_{r}^{\ve}+\phi(X_{r}^{\ve})\|_{H_{0}^{1}}^{2}dr\le C\E\vp^{\ve}(x_{0}),\label{approx_strong_S-4}
\end{equation}
for some constant $C$ independent of $\ve>0$ and depending on $\psi$ via the constants $c_{\psi},C_{\psi}$ only.\end{lem}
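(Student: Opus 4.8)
The idea is to apply It\^o's formula to the functional $t\mapsto t\,\vp^{\ve}(X^{\ve}_t)$, exploiting the fact that $X^{\ve}$ is a \emph{strong} solution (so that $X^{\ve}_t\in H^1_0$ and $A^{\ve}(X^{\ve}_t)=\ve\D X^{\ve}_t+\D\phi(X^{\ve}_t)\in H^{-1}$ makes pointwise sense), together with the subgradient structure $A^{\ve}=-\partial\vp^{\ve}$. Concretely, applying the It\^o formula for $\vp^{\ve}$ along the Gelfand triple $L^2\hookrightarrow H^{-1}\hookrightarrow(L^2)^*$ (justified by the $C^2$-regularity of $\vp^{\ve}$ on $H^1_0\cap H^{2k+1}$ noted above, and an approximation/localization argument), one gets
\begin{align*}
\vp^{\ve}(X^{\ve}_t) = \vp^{\ve}(x_0) &- \int_0^t \|\ve X^{\ve}_r+\phi(X^{\ve}_r)\|_{H^1_0}^2\,dr + \int_0^t \bigl(D\vp^{\ve}(X^{\ve}_r),B(X^{\ve}_r)dW_r\bigr) \\ &+ \tfrac12\int_0^t \Tr\bigl[D^2\vp^{\ve}(X^{\ve}_r)\,B(X^{\ve}_r)B(X^{\ve}_r)^*\bigr]\,dr,
\end{align*}
where I used $\langle A^{\ve}(v),D\vp^{\ve}(v)\rangle_{H^{-1}} = -(\partial\vp^{\ve}(v),\partial\vp^{\ve}(v))_{H^{-1}} = -\|\ve\D v+\D\phi(v)\|_{(H^{-1})}^2 = -\|\ve v+\phi(v)\|_{H^1_0}^2$, the last equality being the identification $\|\D w\|_{H^{-1}} = \|w\|_{H^1_0}$ for $w\in H^1_0$.

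The It\^o-correction (trace) term is where the structural hypotheses \eqref{eq:psi-cdt} enter. Using $D^2\vp^{\ve}(v)(g,h)=\ve\int hg\,dx+\int h\dot\phi(v)g\,dx$ and the explicit form $B(v)e_k=g^k v$, the trace becomes $\sum_k \bigl(\ve\|g^k v\|_2^2 + \int_{\mcO}\dot\phi(v)(g^k v)^2\,dx\bigr)$. The first part is controlled by (B) and the $\ve$-term of $\vp^{\ve}$; for the second, pointwise $\dot\phi(v)(g^kv)^2 = \|g^k\|_\infty^2\,\dot\phi(v)v^2 \le C_\psi\|g^k\|_\infty^2\,\psi(v)$ by the second line of \eqref{eq:psi-cdt}, so after summing in $k$ and integrating in $x$ the trace term is bounded by $C\vp^{\ve}(v)$ with $C$ depending on $\psi$ only through $C_\psi$. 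The stochastic integral is a martingale (in expectation zero, after a standard localization using the $L^2$-bounds of Lemma \ref{lem:H-bound-1} and \eqref{eq:SFDE_strong_bound}), so taking expectations yields
\[
\E\vp^{\ve}(X^{\ve}_t) + \E\int_0^t \|\ve X^{\ve}_r+\phi(X^{\ve}_r)\|_{H^1_0}^2\,dr \le \E\vp^{\ve}(x_0) + C\,\E\int_0^t \vp^{\ve}(X^{\ve}_r)\,dr,
\]
and Gronwall's inequality together with Lemma \ref{lem:H-bound-1} (which bounds $\E\int_0^T\vp^{\ve}(X^{\ve}_r)\,dr$ uniformly in $\ve$ by $C(\E\|x_0\|_{H^{-1}}^2+1)$) gives \eqref{approx_strong_S-4}; note $\E\vp^{\ve}(x_0) \le \E\|x_0\|_2^2 + \|\psi\|_{\cdots}$, but since we only want the stated form we keep $\E\vp^{\ve}(x_0)$ on the right.

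For the time-weighted estimate, apply the same It\^o computation to $r\mapsto r\,\vp^{\ve}(X^{\ve}_r)$, producing the extra term $\int_0^t \vp^{\ve}(X^{\ve}_r)\,dr$ from differentiating the factor $r$; this is precisely the quantity bounded uniformly by Lemma \ref{lem:H-bound-1}, which is what makes the estimate hold for \emph{general} $x_0\in L^2(\O;L^2)$ (not just $\E\vp^{\ve}(x_0)<\infty$). Taking expectations, bounding the trace term as above, and applying Gronwall then yields $\E t\,\vp^{\ve}(X^{\ve}_t)+\E\int_0^t r\|\ve X^{\ve}_r+\phi(X^{\ve}_r)\|_{H^1_0}^2\,dr \le C(\E\|x_0\|_{H^{-1}}^2+1)$. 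The main obstacle is the rigorous justification of the It\^o formula for $\vp^{\ve}$: although $\vp^{\ve}$ is smooth on $H^1_0\cap H^{2k+1}$, the solution $X^{\ve}$ is only known to lie in $L^2(H^1_0)\cap C(H^{-1})$, so one must either invoke a suitable infinite-dimensional It\^o formula for convex $C^1$-functionals on a Gelfand triple (as in the variational literature, e.g.~via the strong-solution regularity $A^{\ve}(X^{\ve})\in L^2([0,T]\times\O;H^{-1})$ from Lemma \ref{lem:fde-l2-test}) or regularize $\vp^{\ve}$ further and pass to the limit; both are routine but require care to keep all constants independent of $\ve$.
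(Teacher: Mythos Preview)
Your proposal is correct and follows essentially the same approach as the paper: apply It\^o's formula to $t\,\vp^{\ve}(X^{\ve}_t)$ (resp.\ $\vp^{\ve}(X^{\ve}_t)$), identify the drift contribution as $-\|\ve X^{\ve}_r+\phi(X^{\ve}_r)\|_{H^1_0}^2$, control the It\^o correction term via the second line of \eqref{eq:psi-cdt} and the explicit form $B(v)e_k=g^kv$, and close the estimate using Lemma~\ref{lem:H-bound-1}. The ``routine regularization'' you flag as the main obstacle is precisely what the paper carries out in detail: it sets $G^{\lambda}:=(J^{\lambda})^{k}$ (the $k$-fold iterate of the resolvent of $-\Delta$ on $H^{-1}$, with $k$ large enough that $G^{\lambda}:H^{-1}\to H^1_0\cap H^{2k+1}$), defines $\vp^{\ve,\lambda}:=\vp^{\ve}\circ G^{\lambda}\in C^2(H^{-1})$, applies the standard It\^o formula to $t\,\vp^{\ve,\lambda}(X^{\ve}_t)$, and then passes $\lambda\to0$ term by term using $G^{\lambda}\to I$ in $L^2$ and $H^1_0$ together with the Lipschitz continuity of $\phi,\dot\phi$.
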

\begin{proof}
Let $J^{\l}$ be the resolvent of $-\D$ on $H^{-1}$. We let $G^{\l}=G^{\l,k}:=J^{\l}\circ\dots\circ J^{\l}$ the $k$-th iteration of $J^{\l}$. Then $G^{\l}:H^{-1}\to H_{0}^{1}\cap H^{2k+1}$ is linear and continuous. Moreover,
\begin{align*}
\|G^{\l}v\|_{2} & \le\|v\|_{2},\quad\forall v\in L^{2}\\
\|G^{\l}v\|_{H_{0}^{1}} & \le\|v\|_{H_{0}^{1}},\quad\forall v\in H_{0}^{1}
\end{align*}
and
\begin{align*}
\|G^{\l}v-v\|_{2} & \le\|J^{\l}G^{\l,k-1}v-J^{\l}v\|_{2}+\|J^{\l}v-v\|_{2}\\
 & \le\|G^{\l,k-1}v-v\|_{2}+\|J^{\l}v-v\|_{2}.
\end{align*}
Iterating this yields
\begin{align*}
\|G^{\l}v-v\|_{2} & \le k\|J^{\l}v-v\|_{2}\to0\quad\text{for }\l\to0
\end{align*}
for all $v\in L^{2}$. Analogously,
\begin{align*}
\|G^{\l}v-v\|_{H_{0}^{1}} & \to0\quad\text{for }\l\to0
\end{align*}
for all $v\in H_{0}^{1}$.

We define $\vp^{\ve,\l}:=\vp^{\ve}\circ G^{\l}.$ Since $G^{\l}:H^{-1}\to H_{0}^{1}\cap H^{2k+1}$ is a linear, continuous operator we have $\vp^{\ve,\l}\in C^{2}(H^{-1})$ with Lipschitz continuous derivatives given by
\begin{align*}
D\vp^{\ve,\l}(v)(h) & =-\left(\ve\D G^{\l}v+\D\phi(G^{\l}v),G^{\l}h\right)_{H^{-1}},\\
D^{2}\vp^{\ve,\l}(v)(g,h) & =-\ve\int_{\mcO}(G^{\l}h)(G^{\l}g)dx+\int_{\mcO}(G^{\l}h)\dot{\phi}(G^{\l}v)(G^{\l}g)dx.
\end{align*}
By Lemma \ref{lem:fde-l2-test} there is a unique strong solution $X^{\ve}$ to \eqref{eq:SFDE-non-deg-on-sing} with $X^{\ve}\in L^{2}(\O;L^{\infty}([0,T];L^{2}))\cap L^{2}([0,T]\times\O;H_{0}^{1})$. We apply Itô's formula to $t\vp^{\ve,\l}(X_{t}^{\ve})$ to get:
\begin{align*}
 & \E t\vp^{\ve,\l}(X_{t}^{\ve})\\
 & =-\E\int_{0}^{t}r(\ve\D G^{\l}X_{r}^{\ve}+\D\phi(G^{\l}X_{r}^{\ve}),G^{\l}(\ve\D X_{r}^{\ve}+\D\phi(X_{r}^{\ve})))_{H^{-1}}dr\\
 & +\frac{\ve}{2}\sum_{k=1}^{\infty}\E\int_{0}^{t}r\int_{\mcO}|G^{\l}(g^{k}X_{r}^{\ve})|^{2}dxdr\\
 & +\frac{1}{2}\sum_{k=1}^{\infty}\E\int_{0}^{t}r\int_{\mcO}G^{\l}(g^{k}X_{r}^{\ve})\dot{\phi}(G^{\l}X_{r}^{\ve})G^{\l}(g^{k}X_{r}^{\ve})dxdr\\
 & +\E\int_{0}^{t}\vp^{\ve,\l}(X_{r}^{\ve})dr.
\end{align*}
We first note that
\begin{align*}
\int_{\mcO}|G^{\l}(g^{k}X_{r}^{\ve})|^{2}dx & \le\int_{\mcO}|g^{k}X_{r}^{\ve}|^{2}dx\\
 & \le\|g^{k}\|_{C^{0}}^{2}\|X_{r}^{\ve}\|_{2}^{2}.
\end{align*}
Moreover,
\begin{align*}
\int_{\mcO}G^{\l}(g^{k}X_{r}^{\ve})\dot{\phi}(G^{\l}X_{r}^{\ve})G^{\l}(g^{k}X_{r}^{\ve})dx & =\int_{\mcO}(G^{\l}(g^{k}X_{r}^{\ve}))^{2}\dot{\phi}(G^{\l}X_{r}^{\ve})dx\\
 & \le\|\dot{\phi}\|_{\infty}\|g^{k}\|_{C^{0}}^{2}\|X_{r}^{\ve}\|_{2}^{2}.
\end{align*}
We note $G^{\l}v\to v$ in $H_{0}^{1}$ for $v\in H_{0}^{1}$. Since $\dot{\phi}$ is Lipschitz we have $\dot{\phi}(G^{\l}X^{\ve})\to\dot{\phi}(X^{\ve})$ in $L^{2}([0,T]\times\O;H_{0}^{1})$. Moreover, $G^{\l}g^{k}X_{r}^{\ve}\to g^{k}X_{r}^{\ve}$ in $L^{2}([0,T]\times\O;H_{0}^{1})$. Using  \eqref{eq:psi-cdt} this implies
\begin{align*}
 & \lim_{\l}\E\int_{0}^{t}r\int_{\mcO}G^{\l}(g^{k}X_{r}^{\ve})\dot{\phi}(G^{\l}X_{r}^{\ve})G^{\l}(g^{k}X_{r}^{\ve})dxdr\\
 & =\E\int_{0}^{t}r\int_{\mcO}(g^{k}X_{r}^{\ve})^{2}\dot{\phi}(X_{r}^{\ve})dxdr\\
 & \le\|g^{k}\|_{C^{0}}^{2}C_{\psi}\E\int_{0}^{t}r\int_{\mcO}\psi(X_{r}^{\ve})dxdr\\
 & =\|g^{k}\|_{C^{0}}^{2}C_{\psi}\E\int_{0}^{t}r\vp(X_{r}^{\ve})dr.
\end{align*}
Hence, by dominated convergence we obtain
\begin{align*}
 & \lim_{\l\to0}\frac{\ve}{2}\sum_{k=1}^{\infty}\E\int_{0}^{t}r\int_{\mcO}|G^{\l}(g^{k}X_{r}^{\ve})|^{2}dxdr\\
 & +\lim_{\l\to0}\frac{1}{2}\sum_{k=1}^{\infty}\E\int_{0}^{t}r\int_{\mcO}G^{\l}(g^{k}X_{r}^{\ve})\dot{\phi}(G^{\l}(X_{r}^{\ve}))G^{\l}(g^{k}X_{r}^{\ve})dxdr\\
 & \le C\left(1+\E\int_{0}^{t}r\vp^{\ve}(X_{r}^{\ve})dr\right),
\end{align*}
for some constant $C$ independent of $\ve>0$ and depending on $\psi$ via the constants $c_{\psi},C_{\psi}$ only. We note that
\begin{align*}
 & (\ve\D G^{\l}X_{r}^{\ve}+\D\phi(G^{\l}X_{r}^{\ve}),G^{\l}(\ve\D X_{r}^{\ve}+\D\phi(X_{r}^{\ve})))_{H^{-1}}\\
 & =(\ve G^{\l}X_{r}^{\ve}+\phi(G^{\l}X_{r}^{\ve}),G^{\l}(\ve X_{r}^{\ve}+\phi(X_{r}^{\ve})))_{H_{0}^{1}}.
\end{align*}
Since $X^{\ve}\in L^{2}([0,T]\times\Omega;H_{0}^{1})$ we have (using dominated convergence) 
\[
G^{\l}X^{\ve}\to X^{\ve}\text{ in }L^{2}([0,T]\times\Omega;H_{0}^{1})\text{ for }\l\to0.
\]
Since $\phi,\dot{\phi}$ are Lipschitz continuous this implies $\phi(G^{\l}X^{\ve})\to\phi(X^{\ve})$ for $\l\to0$ in $L^{2}([0,T]\times\Omega;H_{0}^{1})$.  Hence, we obtain
\begin{align*}
 & \lim_{\l\to0}-\E\int_{0}^{t}r(\ve G^{\l}X_{r}^{\ve}+\phi(G^{\l}X_{r}^{\ve}),G^{\l}(\ve X_{r}^{\ve}+\phi(X_{r}^{\ve})))_{H_{0}^{1}}dr\\
 & =-\E\int_{0}^{t}r\|\ve X_{r}^{\ve}+\phi(X_{r}^{\ve})\|_{H_{0}^{1}}^{2}dr.
\end{align*}
Since $\vp^{\ve}$ is continuous on $L^{2}$, 
\[
|\vp^{\ve,\l}(v)|\le C(1+\|v\|_{2}^{2})
\]
and $G^{\l}X^{\ve}\to X^{\ve}$ in $L^{2}([0,T]\times\Omega;H_{0}^{1})$, $G^{\l}X_{t}^{\ve}\to X_{t}^{\ve}$ in $L^{2}(\Omega;L^{2})$ for all $t\in[0,T]$, by dominated convergence we get
\begin{align*}
 & \lim_{\l\to0}\E t\vp^{\ve,\l}(X_{t}^{\ve})=\E t\vp^{\ve}(X_{t}^{\ve})
\end{align*}
and 
\[
\lim_{\l\to0}\E\int_{0}^{t}\vp^{\ve,\l}(X_{r}^{\ve})dr=\E\int_{0}^{t}\vp^{\ve}(X_{r}^{\ve})dr.
\]
Putting these estimates together yields
\begin{align*}
\E t\vp^{\ve}(X_{t}^{\ve})\le & -\E\int_{0}^{t}r\|\ve X_{r}^{\ve}+\phi(X_{r}^{\ve})\|_{H_{0}^{1}}^{2}dr+C\left(1+\E\int_{0}^{t}r\vp^{\ve}(X_{r}^{\ve})dr\right)\\
 & +\E\int_{0}^{t}\vp^{\ve}(X_{r}^{\ve})dr.
\end{align*}
By Lemma \ref{lem:H-bound-1} we conclude
\begin{align*}
 & \E t\vp^{\ve}(X_{t}^{\ve})+\E\int_{0}^{t}r\|\ve X_{r}^{\ve}+\phi(X_{r}^{\ve})\|_{H_{0}^{1}}^{2}dr\le C\left(\E\|x_{0}\|_{H^{-1}}^{2}+1\right),
\end{align*}
for some constant $C$ independent of $\ve>0$ and depending on $\psi$ via the constants $c_{\psi},C_{\psi}$ only.

To prove \eqref{approx_strong_S-4} we proceed as above but applying Itô's formula for $\vp^{\ve,\l}(X_{t}^{\ve})$ instead of $t\vp^{\ve,\l}(X_{t}^{\ve})$.
\end{proof}

\subsubsection{Proof of Theorem \ref{thm:FDE}\label{sec:proof}}
\begin{proof}
[Proof of Theorem \ref{thm:FDE}:]\textbf{ }The proof proceeds via a three-step approximation. First, we approximate the singular potential $\psi(r)=\frac{1}{m+1}|r|^{m+1}$ by the smooth functions
\[
\psi^{\d}(r)=\frac{1}{m+1}\left(\left(r^{2}+\d\right)^{\frac{m+1}{2}}-\d^{\frac{m+1}{2}}\right).
\]
Note
\begin{align*}
\phi^{\d}(r): & =(\psi^{\d})'(r)=\left(r^{2}+\d\right)^{\frac{m-1}{2}}r\\
\dot{\phi}^{\d}(r): & =\left(r^{2}+\d\right)^{\frac{m-1}{2}}+(m-1)\left(r^{2}+\d\right)^{\frac{m-3}{2}}r^{2}\\
 & =\left(r^{2}+\d\right)^{\frac{m-3}{2}}(\d+mr^{2}).
\end{align*}
Thus, for $\d\in[0,1]$
\begin{align*}
\phi^{\d}(r)r=\left(r^{2}+\d\right)^{\frac{m-1}{2}}r^{2} & =\left(r^{2}+\d\right)^{\frac{m+1}{2}}-\left(r^{2}+\d\right)^{\frac{m-1}{2}}\d\\
 & \ge(m+1)\psi^{\d}(r)-1
\end{align*}
and
\begin{align*}
\dot{\phi}^{\d}(r)r^{2} & =\left(r^{2}+\d\right)^{\frac{m-3}{2}}(\d+mr^{2})r^{2}\\
 & \le(m+1)\left(r^{2}+\d\right)^{\frac{m+1}{2}}\\
 & =(m+1)^{2}\psi^{\d}(r).
\end{align*}
That is, \eqref{eq:psi-cdt} is satisfied with constants $c_{\psi},C_{\psi}$ independent of $\d>0$. Moreover, we observe
\begin{equation}
|\psi^{\d}(r)-\psi(r)|=\frac{2}{m+1}\d^{\frac{m+1}{2}}.\label{eq:del_convergence}
\end{equation}
Next we consider vanishing viscosity 
\begin{align}
dX_{t}^{\ve,\d} & =\ve\D X_{t}^{\ve,\d}dt+\D\phi^{\d}(X_{t}^{\ve,\d})dt+B(X_{t}^{\ve,\d})dW_{t},\label{eq:strong-eps-delta-approx}\\
X_{0}^{\ve,\d} & =x_{0}.\nonumber 
\end{align}
In a third approximating step we consider smooth approximations of the initial condition, i.e. we first assume \textbf{$x_{0}\in L^{2}(\O;L^{2})$}.

\textbf{Step 1: $\d\to0$}

Let $\ve>0$, $x_{0}\in L^{2}(\O;L^{2})$ and for $v\in H^{-1}$ set 
\[
\vp^{\ve,\d}(v):=\begin{cases}
\frac{\ve}{2}\int_{\mcO}|v|^{2}dx+\int_{\mcO}\psi^{\d}(v)dx, & \text{for }v\in L^{2}\\
+\infty, & \text{otherwise. }
\end{cases}
\]
Since 
\[
\partial\vp^{\ve,\d}(v)=\begin{cases}
-\ve\D v-\D\phi^{\d}(v), & \text{for }v\in H_{0}^{1}\\
\emptyset, & \text{otherwise. }
\end{cases}
\]
we may rewrite \eqref{eq:strong-eps-delta-approx} as
\begin{align*}
dX_{t}^{\ve,\d} & =-\partial\vp^{\ve,\d}(X_{t}^{\ve,\d})dt+B(X_{t}^{\ve,\d})dW_{t}
\end{align*}
and define strong solutions to \eqref{eq:strong-eps-delta-approx} as in Definition \ref{def:strong_soln-general}. By Lemma \ref{lem:fde-l2-test} there is a (unique) strong solution $X^{\ve,\d}$ to \eqref{eq:strong-eps-delta-approx}. From Lemma \ref{lem:fde-l2-test} and Lemma \ref{lem:strong_approx-2} we have
\begin{align}
\E\sup_{t\in[0,T]}\|X_{t}^{\ve,\d}\|_{2}^{2}+2\ve\E\int_{0}^{T}\|X_{r}^{\ve,\d}\|_{H_{0}^{1}}^{2}dr & \le C(\E\|x_{0}\|_{2}^{2}+1)\label{eq:eps_del_bounds_1}\\
\E t\vp^{\ve,\d}(X_{t}^{\ve,\d})+\E\int_{0}^{t}r\|\ve X_{r}^{\ve,\d}+\phi^{\d}(X_{r}^{\ve,\d})\|_{H_{0}^{1}}^{2}dr & \le C\left(\E\|x_{0}\|_{H^{-1}}^{2}+1\right)\nonumber 
\end{align}
and
\begin{align}
\E\vp^{\ve,\d}(X_{t}^{\ve,\d})+\E\int_{0}^{t}\|\ve X_{r}^{\ve,\d}+\phi^{\d}(X_{r}^{\ve,\d})\|_{H_{0}^{1}}^{2}dr & \le C\E\vp^{\ve,\d}(x_{0})\label{eq:eps_del_bounds_2}\\
 & \le C(\E\|x_{0}\|_{2}^{2}+1),\nonumber 
\end{align}
with a constant $C>0$ independent of $\ve,\d>0$. Hence, we may extract a subsequence $\d_{n}\to0$ such that
\begin{align*}
X^{\ve,\d_{n}} & \rightharpoonup\bar{X}^{\ve},\quad\text{in }L^{2}([0,T]\times\O;H_{0}^{1})\\
X^{\ve,\d_{n}} & \rightharpoonup^{*}\bar{X}^{\ve},\quad\text{in }L^{2}(\O;L^{\infty}([0,T];L^{2}))\\
\D\phi^{\d_{n}}(X^{\ve,\d_{n}}) & \rightharpoonup\eta^{\ve},\quad\text{in }L^{2}([0,T]\times\O;H^{-1}).
\end{align*}
Since $B:H^{-1}\to L_{2}(H^{-1};H^{-1})$ is continuous, linear we have $B(X^{\ve,\d_{n}})\rightharpoonup B(\bar{X}^{\ve})$ in $L^{2}([0,T]\times\O;L_{2}(H^{-1};H^{-1}))$ and thus $dt\otimes d\P$-almost everywhere 
\[
\bar{X}_{t}^{\ve}=x_{0}+\int_{0}^{t}(\ve\D\bar{X}_{r}^{\ve}+\eta_{r}^{\ve})dr+\int_{0}^{t}B(\bar{X}_{r}^{\ve})dW_{r}.
\]
Hence, defining
\[
X_{t}^{\ve}:=x_{0}+\int_{0}^{t}(\ve\D\bar{X}_{r}^{\ve}+\eta_{r}^{\ve})dr+\int_{0}^{t}B(\bar{X}_{r}^{\ve})dW_{r}
\]
we have $X^{\ve}=\bar{X}^{\ve}$, $dt\otimes d\P$-almost everywhere and due to \cite[Theorem 4.2.5]{PR07} we have $X^{\ve}\in L^{2}(\O;C([0,T];H^{-1}))$. We aim to prove that $X^{\ve}$ is a strong solution to 
\begin{align}
dX_{t}^{\ve} & \in-\partial\vp^{\ve}(X_{t}^{\ve})dt+B(X_{t}^{\ve})dW_{t},\label{eq:strong-eps-approx}
\end{align}
where (for $v\in H^{-1}$)
\[
\vp^{\ve}(v):=\begin{cases}
\frac{\ve}{2}\int_{\mcO}|v|^{2}dx+\int_{\mcO}\psi(v)dx, & \text{for }v\in L^{2}\\
+\infty, & \text{otherwise. }
\end{cases}
\]
Note that
\begin{align}
\partial\vp^{\ve}(v) & =\begin{cases}
\{\ve\D v+\D\z|\ \z\in H_{0}^{1}\text{ with }\z\in\phi(v)\text{ a.e.}\}, & \text{for }v\in H_{0}^{1}\\
\emptyset, & \text{otherwise. }
\end{cases}\label{eq:eps_subgradient}
\end{align}
It thus remains to identify $\ve\D X^{\ve}+\eta^{\ve}\in-\partial\vp^{\ve}(X^{\ve})$, $dt\otimes d\P$-almost everywhere. Itô's formula yields
\begin{align}
\E e^{-Ct}\|X_{t}^{\ve}\|_{H^{-1}}^{2}= & \E\|x_{0}\|_{H^{-1}}^{2}+\E\int_{0}^{t}e^{-Cr}(\ve\D X_{r}^{\ve}+\eta_{r}^{\ve},X_{r}^{\ve})_{H^{-1}}dr\nonumber \\
 & +\E\int_{0}^{t}e^{-Cr}\|B(X_{r}^{\ve})\|_{L_{2}}^{2}dr-C\E\int_{0}^{t}e^{-Cr}\|X_{r}^{\ve}\|_{H^{-1}}^{2}dr\quad\forall t\in[0,T].\label{eq:ito_limit-1}
\end{align}
Using Itô's formula for $X^{\ve,\d}$ yields (for $C>0$ large enough) 
\begin{align*}
 & \E e^{-Ct}\|X_{t}^{\ve,\d_{n}}\|_{H^{-1}}^{2}\\
= & \E\|x_{0}\|_{H^{-1}}^{2}+\E\int_{0}^{t}e^{-Cr}(\ve\D X_{r}^{\ve,\d_{n}}+\D\phi^{\d_{n}}(X_{r}^{\ve,\d_{n}}),X_{r}^{\ve,\d_{n}})_{H^{-1}}dr\\
 & +\E\int_{0}^{t}e^{-Cr}\|B(X_{r}^{\ve,\d_{n}})-B(X_{r}^{\ve})\|_{L_{2}}^{2}dr-\E\int_{0}^{t}e^{-Cr}\|B(X_{r}^{\ve})\|_{L_{2}}^{2}dr\\
 & +2\E\int_{0}^{t}e^{-Cr}(B(X_{r}^{\ve,\d_{n}}),B(X_{r}^{\ve}))_{L_{2}}dr-C\E\int_{0}^{t}e^{-Cr}\|X_{r}^{\ve,\d_{n}}-X_{r}^{\ve}\|_{H^{-1}}^{2}dr\\
 & +C\E\int_{0}^{t}e^{-Cr}\|X_{r}^{\ve}\|_{H^{-1}}^{2}dr-2C\E\int_{0}^{t}e^{-Cr}(X_{r}^{\ve,\d_{n}},X_{r}^{\ve})_{H^{-1}}dr\\
\le & \E\|x_{0}\|_{H^{-1}}^{2}+\E\int_{0}^{t}e^{-Cr}(\ve\D X_{r}^{\ve,\d_{n}}+\D\phi^{\d_{n}}(X_{r}^{\ve,\d_{n}}),X_{r}^{\ve,\d_{n}})_{H^{-1}}dr\\
 & -\E\int_{0}^{t}e^{-Cr}\|B(X_{r}^{\ve})\|_{L_{2}}^{2}dr+2\E\int_{0}^{t}e^{-Cr}(B(X_{r}^{\ve,\d_{n}}),B(X_{r}^{\ve}))_{L_{2}}dr\\
 & +C\E\int_{0}^{t}e^{-Cr}\|X_{r}^{\ve}\|_{H^{-1}}^{2}dr-2C\E\int_{0}^{t}e^{-Cr}(X_{r}^{\ve,\d_{n}},X_{r}^{\ve})_{H^{-1}}dr.
\end{align*}
Taking $\liminf_{\d_{n}\to0}$ we obtain (first in distributional sense in $t$ then a.e. by the Lebesgue Theorem)
\begin{align*}
 & \E e^{-Ct}\|X_{t}^{\ve}\|_{H^{-1}}^{2}\\
\le & \liminf_{\d_{n}\to0}\E e^{-Ct}\|X_{t}^{\ve,\d_{n}}\|_{H^{-1}}^{2}\\
\le & \E\|x_{0}\|_{H^{-1}}^{2}+\liminf_{\d_{n}\to0}\E\int_{0}^{t}e^{-Cr}(\ve\D X_{r}^{\ve,\d_{n}}+\D\phi^{\d_{n}}(X_{r}^{\ve,\d_{n}}),X_{r}^{\ve,\d_{n}})_{H^{-1}}dr\\
 & -\E\int_{0}^{t}e^{-Cr}\|B(X_{r}^{\ve})\|_{L_{2}}^{2}dr+2\E\int_{0}^{t}e^{-Cr}(B(X_{r}^{\ve}),B(X_{r}^{\ve}))_{L_{2}}dr\\
 & +C\E\int_{0}^{t}e^{-Cr}\|X_{r}^{\ve}\|_{H^{-1}}^{2}dr-2C\E\int_{0}^{t}e^{-Cr}(X_{r}^{\ve},X_{r}^{\ve})_{H^{-1}}dr\\
\le & \E\|x_{0}\|_{H^{-1}}^{2}+\liminf_{\d\to0}\E\int_{0}^{t}e^{-Cr}(\ve\D X_{r}^{\ve,\d_{n}}+\D\phi^{\d_{n}}(X_{r}^{\ve,\d_{n}}),X_{r}^{\ve,\d_{n}})_{H^{-1}}dr\\
 & +\E\int_{0}^{t}e^{-Cr}\|B(X_{r}^{\ve})\|_{L_{2}}^{2}dr-C\E\int_{0}^{t}e^{-Cr}\|X_{r}^{\ve}\|_{H^{-1}}^{2}dr\quad\text{a.e. }t\in[0,T].
\end{align*}
Subtracting \eqref{eq:ito_limit-1} we obtain
\begin{align}
 & \E\int_{0}^{t}e^{-Cr}(\ve\D X_{r}^{\ve}+\eta_{r}^{\ve},X_{r}^{\ve})_{H^{-1}}dr\label{eq:minty-1}\\
\le & \liminf_{\d_{n}\to0}\E\int_{0}^{t}e^{-Cr}(\ve\D X_{r}^{\ve,\d_{n}}+\D\phi^{\d_{n}}(X_{r}^{\ve,\d_{n}}),X_{r}^{\ve,\d_{n}})_{H^{-1}}dr.\nonumber 
\end{align}
We now consider the convex, lower-semicontinuous functionals $\bar{\vp}^{\ve},\bar{\vp}^{\ve,\d}:L^{2}([0,T]\times\O;H^{-1})\to\bar{\R}$ defined by
\begin{align*}
\bar{\vp}^{\ve}(v):= & \begin{cases}
\E\int_{0}^{T}e^{-Cr}\int_{\mcO}(\frac{\ve}{2}v^{2}+\psi(v))dxdr & ,\text{ if }v\in L^{2}([0,T]\times\O;L^{2})\\
+\infty & ,\text{ otherwise}
\end{cases}\\
= & \E\int_{0}^{T}e^{-Cr}\vp^{\ve}(v_{r})dr.
\end{align*}
and $\bar{\vp}^{\ve,\d}$ being defined analogously, where we endow $L^{2}([0,T]\times\O;H^{-1})$ with the equivalent norm
\[
\|v\|_{L^{2}([0,T]\times\O;H^{-1})}^{2}:=\E\int_{0}^{T}e^{-Cr}\|v_{r}\|_{H^{-1}}^{2}dr.
\]
Due to the characterization of subgradients of integral functionals proved in \cite[Theorem 21]{R74} we have
\begin{align}
\partial\bar{\vp}^{\ve}(v) & =\{\eta\in L^{2}([0,T]\times\O;H^{-1})|\ \eta\in\partial\vp^{\ve}(v),\ dt\otimes d\P\text{-a.e.}\}\label{eq:eps_subgr}
\end{align}
and 
\begin{equation}
\partial\bar{\vp}^{\ve,\d}(v)=\{-\ve\D v-\D\phi^{\d}(v)\}\quad\text{for }v\in L^{2}([0,T]\times\O;H_{0}^{1}).\label{eq:eps_del_subgr}
\end{equation}
Since $ $$\bar{\vp}^{\ve,\d}\to\bar{\vp}^{\ve}$ uniformly for $\d\to0$ (cf. \eqref{eq:del_convergence}), we also have $\bar{\vp}^{\ve,\d}\to\bar{\vp}^{\ve}$ in Mosco sense. Due to \eqref{eq:eps_del_subgr} we have
\[
(-\ve\D X^{\ve,\d_{n}}-\D\phi^{\d_{n}}(X^{\ve,\d_{n}}),Y-X^{\ve,\d_{n}})_{L^{2}([0,T]\times\O;H^{-1})}+\bar{\vp}^{\ve,\d_{n}}(X^{\ve,\d_{n}})\le\bar{\vp}^{\ve,\d_{n}}(Y),
\]
for all $Y\in L^{2}([0,T]\times\O;H^{-1})$. Using \eqref{eq:minty-1} and Mosco convergence of $\bar{\vp}^{\ve,\d}$ to $\bar{\vp}^{\ve}$ we may take the $\liminf_{n\to\infty}$ to get
\[
(-\ve\D X^{\ve}-\eta^{\ve},Y-X^{\ve})_{L^{2}([0,T]\times\O;H^{-1})}+\bar{\vp}^{\ve}(X^{\ve})\le\bar{\vp}^{\ve}(Y).
\]
Hence, $\ve\D X^{\ve}+\eta^{\ve}\in-\partial\bar{\vp}^{\ve}(X^{\ve})$ and we conclude $\ve\D X^{\ve}+\eta^{\ve}\in-\partial\vp^{\ve}(X^{\ve})$ $dt\otimes d\P$-almost everywhere due to \eqref{eq:eps_subgr}. Then, \eqref{eq:eps_subgradient} yields
\begin{equation}
\eta^{\ve}=\D\z^{\ve}\label{eq:eta_eps_char}
\end{equation}
with $\z^{\ve}\in H_{0}^{1}$ and $\z^{\ve}\in\phi(X^{\ve})$ a.e..  In conclusion, $X^{\ve}$ is a strong solution to \eqref{eq:strong-eps-approx}. Passing to the limit in \eqref{eq:eps_del_bounds_1}, \eqref{eq:eps_del_bounds_2} yields
\begin{align}
\E\sup_{t\in[0,T]}\|X_{t}^{\ve}\|_{2}^{2}+2\ve\E\int_{0}^{T}\|X_{r}^{\ve}\|_{H_{0}^{1}}^{2}dr & \le C(\E\|x_{0}\|_{2}^{2}+1)\label{eq:eps_bounds_1}\\
\E t\vp^{\ve}(X_{t}^{\ve})+\E\int_{0}^{t}r\|\ve\D X^{\ve}+\eta_{r}^{\ve}\|_{H^{-1}}^{2}dr & \le C\left(\E\|x_{0}\|_{H^{-1}}^{2}+1\right)\nonumber 
\end{align}
and
\begin{align}
\E\vp^{\ve}(X_{t}^{\ve})+\E\int_{0}^{t}\|\ve\D X^{\ve}+\eta_{r}^{\ve}\|_{H^{-1}}^{2}dr & \le\E\vp^{\ve}(x_{0})\label{eq:eps_bounds_2}\\
 & \le C(\E\|x_{0}\|_{2}^{2}+1).\nonumber 
\end{align}

\textbf{Step 2: $\ve\to0$}

For $\ve_{1},\ve_{2}>0$ let $(X^{\ve_{1}},\eta^{\ve_{1}})$, $(X^{\ve_{2}},\eta^{\ve_{2}})$ be two strong solutions to \eqref{eq:strong-eps-approx} with initial conditions $x_{0}^{1},x_{0}^{2}\in L^{2}(\O;L^{2})$ respectively. Itô's formula implies 
\begin{align*}
 & e^{-Kt}\|X_{t}^{\ve_{1}}-X_{t}^{\ve_{2}}\|_{H^{-1}}^{2}\\
 & =\|x_{0}^{1}-x_{0}^{2}\|_{H^{-1}}^{2}+\int_{0}^{t}2e^{-Kr}(\ve_{1}\D X^{\ve_{1}}+\eta^{\ve_{1}}-(\ve_{2}\D X^{\ve_{2}}+\eta^{\ve_{2}}),X_{r}^{\ve_{1}}-X_{r}^{\ve_{2}})_{H^{-1}}dr\\
 & +\int_{0}^{t}e^{-Kr}(X_{r}^{\ve_{1}}-X_{r}^{\ve_{2}},B(X_{r}^{\ve_{1}})-B(X_{r}^{\ve_{2}}))_{H^{-1}}dW_{r}\\
 & +\int_{0}^{t}e^{-Kr}\|B(X_{r}^{\ve_{1}})-B(X_{r}^{\ve_{2}})\|_{L_{2}}^{2}dr-K\E\int_{0}^{t}e^{-Kr}\|X_{r}^{\ve_{1}}-X_{r}^{\ve_{2}}\|_{H^{-1}}^{2}dr.
\end{align*}
Due to \eqref{eq:eta_eps_char} we have
\[
(\eta^{\ve_{1}}-\eta^{\ve_{2}},X^{\ve_{1}}-X^{\ve_{2}})_{H^{-1}}\le0
\]
and we note that
\[
(\ve_{1}\D X_{r}^{\ve_{1}}-\ve_{2}\D X_{r}^{\ve_{2}},X_{r}^{\ve_{1}}-X_{r}^{\ve_{2}})_{H^{-1}}\le2(\ve_{1}+\ve_{2})(\|X_{r}^{\ve_{1}}\|_{2}^{2}+\|X_{r}^{\ve_{2}}\|_{2}^{2}).
\]
Hence, the Burkholder-Davis-Gundy inequality and Lemma \ref{lem:fde-l2-test} imply
\begin{align}
\E\sup_{t\in[0,T]}\|X_{t}^{\ve_{1}}-X_{t}^{\ve_{2}}\|_{H^{-1}}^{2}\le & C\E\|x_{0}^{1}-x_{0}^{2}\|_{H^{-1}}^{2}\label{eq:vanishing-ineq}\\
 & +C(\ve_{1}+\ve_{2})\left(\E\|x_{0}^{1}\|_{2}^{2}+\E\|x_{0}^{2}\|_{2}^{2}+1\right).\nonumber 
\end{align}
Let now $x_{0}\in L^{2}(\O;L^{2})$ and for each $\ve>0$ let $(X^{\ve},\eta^{\ve})$ be a solution to \eqref{eq:strong-eps-approx} with initial condition $x_{0}.$ Due to \eqref{eq:vanishing-ineq} there is an $\mcF_{t}$-adapted process $X\in L^{2}([0,T]\times\O;H^{-1})$ with $X_{0}=x_{0}$ such that
\[
X^{\ve}\to X\quad\text{in }L^{2}(\O;C([0,T];H^{-1}))\quad\text{for }\ve\to0.
\]
Using \eqref{eq:eps_bounds_2} we can extract a weakly convergent subsequence 
\[
\ve_{n}\D X^{\ve_{n}}+\eta^{\ve_{n}}\rightharpoonup\eta,\quad\text{in }L^{2}([0,T]\times\O;H^{-1}).
\]
By step one we have $\ve_{n}\D X^{\ve_{n}}+\eta^{\ve_{n}}\in-\partial\vp^{\ve_{n}}(X^{\ve_{n}})$ a.e., hence
\[
(\ve_{n}\D X^{\ve_{n}}+\eta^{\ve_{n}},X^{\ve_{n}}-Y)_{L^{2}([0,T]\times\O;H^{-1})}+\bar{\vp}^{\ve_{n}}(X^{\ve_{n}})\le\bar{\vp}^{\ve_{n}}(Y),
\]
for all $Y\in L^{2}([0,T]\times\O;H^{-1})$. For $v\in L^{2}([0,T]\times\O;H^{-1})$ we define
\[
\bar{\vp}(v)=\E\int_{0}^{T}\vp(v_{r})dr.
\]
Again, due to \cite[Theorem 21]{R74} we have
\begin{equation}
\partial\bar{\vp}(v)=\{\eta\in L^{2}([0,T]\times\O;H^{-1})|\ \eta\in\partial\vp(v),\ dt\otimes d\P\text{-a.e.}\}.\label{eq:subgr}
\end{equation}
For $v\in L^{2}$ we observe that $\vp^{\ve}(v)=\frac{\ve}{2}\|v\|_{2}^{2}+\vp(v)$ and thus
\begin{align}
 & (\ve_{n}\D X^{\ve_{n}}+\eta^{\ve_{n}},X^{\ve_{n}}-Y)_{L^{2}([0,T]\times\O;H^{-1})}+\bar{\vp}(X^{\ve_{n}})\label{eq:approx_subgr}\\
 & \le\bar{\vp}(Y)+C\ve_{n}\|Y\|_{L^{2}([0,T]\times\O;L^{2})}^{2},\nonumber 
\end{align}
for all $Y\in L^{2}([0,T]\times\O;L^{2})$. 

Let $J^{\l}=(1-\l\D)^{-1}$ be the resolvent of $-\D$ on $H^{-1}$. Then
\[
\|J^{\l}v\|_{H^{-1}}\le\|v\|_{H^{-1}}
\]
and $J^{\l}v\to v$ in $H^{-1}$ for $\l\to0$. Moreover,
\[
\|J^{\l}v\|_{L^{2}}\le\frac{C}{\l}\|v\|_{H^{-1}}
\]
for all $v\in H^{-1}$. For $v\in L^{m+1}\cap H^{-1}$ we have 
\[
\vp(J^{\l}v)=\|J^{\l}v\|_{L^{m+1}}\le\|v\|_{L^{m+1}}=\vp(v).
\]
For the case $m=0$, in addition: Let $v=v_{\mu}\in\mcM\cap H^{-1}$. Since $\vp$ is the lower-semicontinuous hull of $\vp$ restricted to $L^{1}\cap H^{-1}$ (cf. Appendix \ref{sec:heaviside_relaxation}), there is a sequence $v^{n}\in L^{1}\cap H^{-1}$ such that $v^{n}\to v$ in $H^{-1}$ and $\vp(v^{n})\to\vp(v)$. By lower-semicontinuity of $\vp$ we conclude
\begin{align}
\vp(J^{\l}v) & \le\lim_{n\to\infty}\vp(J^{\l}v^{n})\nonumber \\
 & \le\lim_{n\to\infty}\vp(v^{n})\label{eq:resovlent_nonexp-1}\\
 & =\vp(v),\quad\forall v\in\mcM\cap H^{-1}.\nonumber 
\end{align}
Given $ $$Y\in L^{2}([0,T]\times\O;H^{-1})$ we set $Y^{\ve}:=J^{\ve^{\frac{1}{4}}}Y\in L^{2}([0,T]\times\O;L^{2})$. By dominated convergence $Y^{\ve}\to Y$ in $L^{2}([0,T]\times\O;H^{-1})$. From \eqref{eq:approx_subgr} we obtain 
\begin{align*}
 & (\ve_{n}\D X^{\ve_{n}}+\eta^{\ve_{n}},X^{\ve_{n}}-Y^{\ve_{n}})_{L^{2}([0,T]\times\O;H^{-1})}+\bar{\vp}(X^{\ve_{n}})\\
 & \le\bar{\vp}(Y^{\ve_{n}})+C\ve_{n}\|Y^{\ve_{n}}\|_{L^{2}([0,T]\times\O;L^{2})}^{2}\\
 & \le\bar{\vp}(Y)+C\sqrt{\ve_{n}}\|Y\|_{L^{2}([0,T]\times\O;H^{-1})}^{2}.
\end{align*}
Taking $n\to\infty$ and using lower semicontinuity of $\bar{\vp}$ we arrive at 
\[
(\eta,X-Y)_{L^{2}([0,T]\times\O;H^{-1})}+\bar{\vp}(X)\le\bar{\vp}(Y)
\]
for all $Y\in L^{2}([0,T]\times\O;H^{-1})$ and thus $\eta\in\partial\bar{\vp}(X)$, which implies $\eta\in\partial\vp(X)$ a.e. by \eqref{eq:subgr}. In conclusion, $X$ is a strong solution to 
\begin{align}
dX_{t} & \in-\partial\vp(X_{t})dt+B(X_{t})dW_{t}.\label{eq:limit}
\end{align}
Taking the limit in \eqref{eq:eps_bounds_1}, \eqref{eq:eps_bounds_2} yields
\begin{align}
\E\sup_{t\in[0,T]}\|X_{t}\|_{2}^{2} & \le C(\E\|x_{0}\|_{2}^{2}+1)\label{eq:bounds_1}\\
\E t\vp(X_{t})+\E\int_{0}^{t}r\|\eta_{r}\|_{H^{-1}}^{2}dr & \le C\left(\E\|x_{0}\|_{H^{-1}}^{2}+1\right)\nonumber 
\end{align}
and
\begin{align}
\E\vp(X_{t})+\E\int_{0}^{t}\|\eta_{r}\|_{H^{-1}}^{2}dr & \le C\E\vp(x_{0})\label{eq:bounds_2}\\
 & \le C(\E\|x_{0}\|_{2}^{2}+1).\nonumber 
\end{align}
Moreover, from \eqref{eq:vanishing-ineq} we obtain
\begin{align}
\E\sup_{t\in[0,T]}\|X_{t}^{1}-X_{t}^{2}\|_{H^{-1}}^{2}\le & C\E\|x_{0}^{1}-x_{0}^{2}\|_{H^{-1}}^{2},\label{eq:vanishing-ineq-1}
\end{align}
where $X^{1},X^{2}$ are the corresponding limits for the initial conditions $x_{0}^{1},x_{0}^{2}$ respectively.

\textbf{Step 3: Proof of (i)}

Suppose $x_{0}\in L^{2}(\O;H^{-1})$ satisfying $\E\vp(x_{0})<\infty.$ We consider the case $m=0$, the case $m>0$ can be treated analogously. Let $J^{\l}=(1-\l\D)^{-1}$ be the resolvent of $-\D$ on $H^{-1}$, set $x_{0}^{n}=J^{\frac{1}{n}}x_{0}$ and let $(X^{n},\eta^{n})$ be the corresponding strong solution to \eqref{eq:limit} constructed in step two. By dominated convergence we have
\[
x_{0}^{n}\to x_{0},\quad\text{in }L^{2}(\O;H^{-1}).
\]
Due to \eqref{eq:bounds_2}, \eqref{eq:vanishing-ineq-1} and \eqref{eq:resovlent_nonexp-1} we may extract (weakly) convergent subsequences
\begin{align*}
X^{n} & \to X,\quad\text{in }L^{2}(\O;C([0,T];H^{-1}))\\
\eta^{n} & \rightharpoonup\eta,\quad\text{in }L^{2}([0,T]\times\O;H^{-1}).
\end{align*}
Since $\eta^{n}\in-\partial\bar{\vp}(X^{n})$, strong-weak closedness of the subgradient $\partial\bar{\vp}$ implies $\eta\in-\partial\bar{\vp}(X)$ and thus $\eta\in-\partial\vp(X)$ a.e.. It then easily follows that $(X,\eta)$ is a strong solution to \eqref{eq:SFDE-mult}.

\textbf{Step 3: Proof of (ii) }

Let\textbf{ }$x_{0}\in L^{2}(\O;H^{-1})$ and \textbf{$x_{0}^{n}\in L^{2}(\O;L^{2})$ }with $x_{0}^{n}\to x$ in $L^{2}(\O;H^{-1})$, $\E\|x_{0}^{n}\|_{H^{-1}}^{2}\le\E\|x_{0}\|_{H^{-1}}^{2}$ and let $(X^{n},\eta^{n})$ be the corresponding strong solutions to \eqref{eq:limit} constructed in step two. By \eqref{eq:vanishing-ineq-1} we have
\[
\E\sup_{t\in[0,T]}\|X^{n}-X^{m}\|_{H^{-1}}^{2}\le C\E\|x_{n}-x_{m}\|_{H^{-1}}^{2}.
\]
Hence, $X^{n}\to X$ in $L^{2}(\O;C([0,T];H^{-1}))$. Moreover, 
\begin{align*}
 & \E t\vp(X_{t}^{n})+\E\int_{0}^{t}r\|\eta_{r}^{n}\|_{H^{-1}}^{2}dr\le C\left(\E\|x_{0}\|_{H}^{2}+1\right).
\end{align*}
Hence, there is a map $\eta$ with $\eta\in L^{2}([\tau,T]\times\O;H_{0}^{1})$ such that
\[
\eta^{n}\rightharpoonup\eta,\quad\text{in }L^{2}([\tau,T]\times\O;H^{-1}),
\]
for all $\tau>0$. By strong-weak closedness of $\partial\bar{\vp}$ we have $\eta\in-\partial\bar{\vp}(X)$ and thus $\eta\in-\partial\vp(X)$ a.e.. Hence, $X$ is a generalized strong solution satisfying
\begin{align*}
 & \E t\vp(X_{t})+\E\int_{0}^{t}r\|\eta\|_{H^{-1}}^{2}dr\le C\left(\E\|x_{0}\|_{H^{-1}}^{2}+1\right).
\end{align*}

\end{proof}
\appendix

\section{(Generalized) strong solutions to gradient type SPDE\label{sec:strong_solutions}}

Let $\vp:H\to\R$ be a proper, lower-semicontinuous, convex function on a separable real Hilbert space $H$. We consider SPDE of the type 
\begin{align}
dX_{t} & \in-\partial\vp(X_{t})dt+B(t,X_{t})dW_{t},\label{eq:gradient_SPDE_general}\\
X_{0} & =x_{0},\nonumber 
\end{align}
where $W$ is a cylindrical Wiener process in a separable Hilbert space $U$ defined on a probability space $(\Omega,\mcF,\P)$ with normal filtration $(\mcF_{t})_{t\ge0}$ and $B:[0,T]\times H\times\O\to L_{2}(U,H)$ is Lipschitz continuous, i.e.
\[
\|B(t,v)-B(t,w)\|_{L_{2}(U,H)}^{2}\le C\|v-w\|_{H}^{2}\quad\forall v,w\in H
\]
and all $(t,\o)\in[0,T]\times\O$. Furthermore, we assume that 
\[
\|B(\cdot,0)\|_{L_{2}(U,H)}\in L^{2}([0,T]\times\Omega).
\]
We then define
\begin{defn}
\label{def:strong_soln-general}Let $x_{0}\in L^{2}(\Omega;H).$ An $H$-continuous, $\mcF_{t}$-adapted process $X\in L^{2}(\Omega;C([0,T];H))$ for which there exists a selection $\eta\in-\partial\vp(X)$, $dt\otimes d\P$-a.e. is said to be a 
\begin{enumerate}
\item strong solution to \eqref{eq:gradient_SPDE_general} if 
\[
\eta\in L^{2}([0,T]\times\Omega;H)
\]
and $\P$-a.s.
\[
X_{t}=x_{0}+\int_{0}^{t}\eta_{r}dr+\int_{0}^{t}B(r,X_{r})dW_{r},\quad\forall t\in[0,T].
\]

\item generalized strong solution to \eqref{eq:gradient_SPDE_general} if 
\[
\eta\in L^{2}([\tau,T]\times\Omega;H),\quad\forall\tau>0
\]
and $\P$-a.s.
\[
X_{t}=X_{\tau}+\int_{\tau}^{t}\eta_{r}dr+\int_{\tau}^{t}B(r,X_{r})dW_{r},\quad\forall t\in[\tau,T],
\]
for all $\tau>0$.
\end{enumerate}
\end{defn}

\section{Non-degenerate, non-singular stochastic fast diffusion equations\label{sec:app-non-deg}}

In this section we consider non-degenerate, non-singular approximations to \eqref{eq:SFDE}, that is 
\begin{align}
dX_{t} & =\ve\D X_{t}dt+\D\phi(X_{t})dt+B(t,X_{t})dW_{t},\label{eq:SFDE-1}\\
X_{0} & =x_{0},\nonumber 
\end{align}
where $\phi:\R\to\R$ is a Lipschitz continuous, monotone function satisfying $\phi(0)=0$. We further assume that $W$ is a cylindrical Wiener process in a separable Hilbert space $U$ defined on a probability space $(\Omega,\mcF,\P)$ with normal filtration $(\mcF_{t})_{t\ge0}$ and $B:[0,T]\times H^{-1}\times\O\to L_{2}(U,H^{-1})$ is Lipschitz continuous, i.e.
\begin{align*}
\|B(t,v)-B(t,w)\|_{L_{2}(U,H^{-1})}^{2} & \le C\|v-w\|_{H^{-1}}^{2}\quad\forall v,w\in H^{-1},
\end{align*}
for some constant $C>0$ and all $(t,\o)\in[0,T]\times\O$. We assume
\begin{equation}
\|B(t,v)\|_{L_{2}(U,L^{2})}^{2}\le C(1+\|v\|_{2}^{2}),\quad\forall v\in L^{2},\label{eq:sublinear_B_in_S}
\end{equation}
and all $(t,\o)\in[0,T]\times\O$. By \cite{PR07} there is a unique variational solution $X$ to \eqref{eq:SFDE-1} with respect to the Gelfand triple
\[
L^{2}\hookrightarrow H^{-1}\hookrightarrow(L^{2})^{*}.
\]
Under an additional regularity assumption on the diffusion coefficients $B$ we prove that in fact, these solutions are strong solutions in $H^{-1}$. 
\begin{lem}
\label{lem:fde-l2-test}Let $x_{0}\in L^{2}(\O;L^{2})$. Then 
\[
\E\sup_{t\in[0,T]}\|X_{t}\|_{2}^{2}+\ve\E\int_{0}^{T}\|X_{r}\|_{H_{0}^{1}}^{2}dr\le C(\E\|x_{0}\|_{2}^{2}+1),
\]
with a constant $C>0$ independent of $\ve$ and $\phi$.\end{lem}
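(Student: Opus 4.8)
The statement is an a priori estimate for the variational solution $X$ of the non-degenerate, non-singular equation \eqref{eq:SFDE-1} when $x_0\in L^2(\Omega;L^2)$. The natural strategy is to apply It\^o's formula to $\|X_t\|_2^2$, i.e.\ to test the equation with $X_t$ in the Gelfand triple $L^2\hookrightarrow H^{-1}\hookrightarrow (L^2)^*$. Since the variational solution already satisfies It\^o's formula for $\|\cdot\|_{L^2}^2$ along the triple (by \cite{PR07}), the point is simply to bound the resulting drift and martingale terms. Testing with $X_r$ produces the drift term
\[
2\,{}_{(L^2)^*}\<\ve\D X_r+\D\phi(X_r),X_r\>_{L^2}
=-2\ve\|X_r\|_{H_0^1}^2-2\int_{\mcO}\phi(X_r)X_r\,dx
\le -2\ve\|X_r\|_{H_0^1}^2,
\]
where monotonicity of $\phi$ together with $\phi(0)=0$ gives $\phi(r)r\ge0$, so the nonlinear term has a good sign and can simply be dropped. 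The It\^o correction term contributes $\int_0^t\|B(r,X_r)\|_{L_2(U,L^2)}^2\,dr$, which by the sublinear growth bound \eqref{eq:sublinear_B_in_S} is controlled by $C\int_0^t(1+\|X_r\|_2^2)\,dr$.

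\textbf{Carrying this out.} First I would write, for $t\in[0,T]$,
\[
\|X_t\|_2^2+2\ve\int_0^t\|X_r\|_{H_0^1}^2\,dr
\le \|x_0\|_2^2+C\int_0^t(1+\|X_r\|_2^2)\,dr
+2\int_0^t(X_r,B(r,X_r)\,dW_r)_{L^2},
\]
having absorbed the nonlinear term via its sign. Next, to get the supremum inside the expectation, I would take $\sup_{t\le s}$ of both sides (for $s\le T$), take expectations, and estimate the stochastic integral by the Burkholder--Davis--Gundy inequality:
\[
\E\sup_{t\le s}\Big|\int_0^t(X_r,B(r,X_r)\,dW_r)_{L^2}\Big|
\le C\,\E\Big(\int_0^s\|X_r\|_2^2\|B(r,X_r)\|_{L_2(U,L^2)}^2\,dr\Big)^{1/2}.
\]
Using \eqref{eq:sublinear_B_in_S} inside and then Young's inequality $ab\le \tfrac14 a^2+b^2$ with $a=\sup_{t\le s}\|X_t\|_2$, this is bounded by $\tfrac14\E\sup_{t\le s}\|X_t\|_2^2+C\E\int_0^s(1+\|X_r\|_2^2)\,dr$. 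The $\tfrac14$-term is absorbed into the left-hand side. This leaves
\[
\E\sup_{t\le s}\|X_t\|_2^2+2\ve\E\int_0^s\|X_r\|_{H_0^1}^2\,dr
\le 2\E\|x_0\|_2^2+C+C\int_0^s\E\sup_{t\le r}\|X_t\|_2^2\,dr,
\]
and Gronwall's inequality in $s$ yields $\E\sup_{t\le T}\|X_t\|_2^2\le C(\E\|x_0\|_2^2+1)$; reinserting this bound on the right gives the stated estimate for the full left-hand side, with $C$ independent of $\ve$ and $\phi$ since the only properties of $\phi$ used were monotonicity and $\phi(0)=0$.

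\textbf{Main obstacle.} There is no deep difficulty; the only point requiring care is the rigorous justification of the chain rule for $\|X_t\|_2^2$ in the Gelfand triple, which is not automatic because $X$ is a priori only a variational solution. However, this is exactly the content of the standard It\^o formula for variational solutions (cf.\ \cite[Theorem 4.2.5]{PR07}), applicable here because the coercivity and boundedness conditions for the triple $L^2\hookrightarrow H^{-1}\hookrightarrow(L^2)^*$ hold for \eqref{eq:SFDE-1} (the $\ve\D$ term gives coercivity in $H_0^1$, and $\D\phi(\cdot)$ is bounded from $H_0^1$ to $H^{-1}$ by Lipschitz continuity of $\phi$). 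A minor technical point is that, to apply BDG and Gronwall honestly, one should first localize with stopping times $\tau_N=\inf\{t:\|X_t\|_2>N\}$ to ensure all quantities are integrable, carry out the above on $[0,t\wedge\tau_N]$, and pass $N\to\infty$ by monotone convergence using that $X\in L^2(\Omega;C([0,T];H^{-1}))\cap L^2([0,T]\times\Omega;L^2)$ from the variational theory; I would include this localization argument but not dwell on it.
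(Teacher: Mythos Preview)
There is a genuine gap in your justification of the It\^o formula. In the Gelfand triple $V=L^{2}\hookrightarrow H=H^{-1}\hookrightarrow V^{*}=(L^{2})^{*}$, the It\^o formula of \cite[Theorem~4.2.5]{PR07} gives the evolution of $\|X_t\|_{H}^{2}=\|X_t\|_{H^{-1}}^{2}$, \emph{not} of $\|X_t\|_{L^{2}}^{2}$. So your first displayed inequality is not delivered by the reference you cite. Your own discussion of the ``main obstacle'' reveals the mix-up: you speak of ``coercivity in $H_0^1$'' and of $\Delta\phi(\cdot)$ being ``bounded from $H_0^1$ to $H^{-1}$'', which are the structural conditions for the \emph{other} triple $H_0^1\hookrightarrow L^2\hookrightarrow H^{-1}$, not for $L^2\hookrightarrow H^{-1}\hookrightarrow (L^2)^*$. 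From the variational theory in the latter triple you only know $X\in L^2([0,T]\times\Omega;L^2)$, so the term $-\ve\|X_r\|_{H_0^1}^2$ and the pairing ${}_{(L^2)^*}\langle \ve\Delta X_r+\Delta\phi(X_r),X_r\rangle_{L^2}$ are not a priori defined.

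The paper circumvents this by carrying out the computation at the Galerkin level: for the finite-dimensional approximants $X^n$ the It\^o formula for $\|X^n_t\|_2^2$ is elementary, the sign $\,_{H_0^1}\langle X^n_r,\Delta\phi(X^n_r)\rangle_{H^{-1}}\le0$ and the BDG estimate give a bound uniform in $n$ (and in $\ve,\phi$), and one passes to the weak limit using lower semicontinuity of the norms. Your route can be salvaged, but it needs an extra step: since $\phi$ is Lipschitz with $\phi(0)=0$ and the $\ve\Delta$ term provides (an $\ve$-dependent) coercivity, equation \eqref{eq:SFDE-1} is \emph{also} well-posed as a variational equation in $H_0^1\hookrightarrow L^2\hookrightarrow H^{-1}$; by uniqueness (in the weaker triple) that solution coincides with $X$, which then lies in $L^2([0,T]\times\Omega;H_0^1)\cap L^2(\Omega;C([0,T];L^2))$ and satisfies the It\^o formula for $\|\cdot\|_2^2$. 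Once this is in place, your drift/BDG/Gronwall argument goes through and yields the $\ve$-independent bound. As written, however, this identification is missing and the proof is incomplete.
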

\begin{proof}
In the following we let $(e_{i})_{i=1}^{\infty}$ be an orthonormal basis of eigenvectors of $-\D$ in $H^{-1}$. We further let $P^{n}:H^{-1}\to\text{span}\{e_{1},\dots,e_{n}\}$ be the orthogonal projection onto the span of the first $n$ eigenvectors. We recall that the unique variational solution $X$ to \eqref{eq:SFDE-1} is constructed in \cite{PR07} as a (weak) limit in $L^{2}([0,T]\times\O;L^{2})$ of the solutions to the following Galerkin approximation
\begin{align*}
dX_{t}^{n} & =\ve P^{n}\D X_{t}^{n}dt+P^{n}\D\phi(X_{t}^{n})dt+P^{n}B(t,X_{t}^{n})dW_{t}^{n},\\
X_{0}^{n} & =P^{n}x_{0}.
\end{align*}
We observe that
\begin{align*}
\|X_{t}^{n}\|_{2}^{2} & =\|P^{n}x_{0}\|_{2}^{2}+2\int_{0}^{t}(X_{r}^{n},\ve P^{n}\D X_{r}^{n}+P^{n}\D\phi(X_{r}^{n}))_{2}dr\\
 & +2\int_{0}^{t}(X_{r}^{n},P^{n}B(r,X_{r}^{n})dW_{r}^{n})_{2}dr+\int_{0}^{t}\|P^{n}B(r,X_{r}^{n})\|_{L_{2}(U,L^{2})}^{2}dr\\
 & =\|P^{n}x_{0}\|_{2}^{2}-2\ve\int_{0}^{t}\|X_{r}^{n}\|_{H_{0}^{1}}^{2}dr+2\int_{0}^{t}{}_{H_{0}^{1}}\<X_{r}^{n},\D\phi(X_{r}^{n})\>_{H^{-1}}dr\\
 & +2\int_{0}^{t}(X_{r}^{n},P^{n}B(r,X_{r}^{n})dW_{r}^{n})_{2}dr+\int_{0}^{t}\|P^{n}B(r,X_{r}^{n})\|_{L_{2}(U,L^{2})}^{2}dr.
\end{align*}
Since $\phi$ can be approximated by a sequence of increasing, Lipschitz functions in $C^{1}(\R)$ we observe that 
\[
_{H_{0}^{1}}\<X_{r}^{n},\D\phi(X_{r}^{n})\>_{H^{-1}}=-(\nabla X_{r}^{n},\nabla\phi(X_{r}^{n}))_{2}\le0.
\]
Using this, the Burkholder-Davis-Gundy inequality and \eqref{eq:sublinear_B_in_S} we obtain
\begin{align*}
\frac{1}{2}\E\sup_{t\in[0,T]}e^{-Ct}\|X_{t}^{n}\|_{2}^{2} & \le\E\|x_{0}\|_{2}^{2}-2\ve\E\int_{0}^{T}e^{-Cr}\|X_{r}^{n}\|_{H_{0}^{1}}^{2}dr+C.
\end{align*}
Passing to the (weak) limit yields the result.
\end{proof}

\section{Moreau-Yosida approximation of singular powers\label{sec:Moreau-Yosida}}

In this section we collect and prove some facts about the Moreau-Yosida approximation of certain monomials (cf. e.g. \cite[Section 2.2]{B10} for backgound on the Moreau-Yosida approximation). 

For $m\in[0,1]$ let $\psi(r):=\frac{1}{m+1}|r|^{m+1}$, $r\in\R$ and let $\psi^{\ve}:\R\to\R$ be the Moreau-Yosida approximation of $\psi$, i.e.
\[
\psi^{\ve}(r):=\inf_{s\in\R}\left\{ \frac{|r-s|^{2}}{2\ve}+\psi(s)\right\} .
\]
Then 
\[
\partial\psi^{\ve}=\phi^{\ve}(r):=\frac{1}{\ve}(r-J^{\ve}r)\in\phi(J^{\ve}r)\quad\forall r\in\R,
\]
is the Yosida approximation of $\phi=\partial\psi$, where $J^{\ve}=(I+\ve\phi)^{-1}$ is the resolvent of $\phi$ with $I$ denoting the identity map on $\R$. We note that
\begin{equation}
|\phi^{\ve}(r)|\le|\phi(r)|:=\inf\{|\eta|:\eta\in\phi(r)\}\quad\forall r\in\R.\label{eq:phi-delta-bound}
\end{equation}
Moreover,
\begin{align}
\psi^{\ve}(r) & =\frac{1}{2\ve}|r-J^{\ve}r|^{2}+\psi(J^{\ve}r)\label{eq:moreau_yos}\\
 & =\frac{\ve}{2}|\phi^{\ve}(r)|^{2}+\psi(J^{\ve}r)\quad\forall r\in\R\nonumber 
\end{align}
and thus
\begin{equation}
\psi(J^{\ve}r)\le\psi^{\ve}(r)\le\psi(r)\quad\forall r\in\R.\label{eq:MY-ineq}
\end{equation}
By the subgradient inequality we have
\[
\eta(J^{\ve}r-r)+\psi(r)\le\psi(J^{\ve}r)
\]
for all $\eta\in\phi(r)$. Hence, using the definition of $\phi^{\ve}$ 
\begin{align*}
\psi(r)-\psi(J^{\ve}r) & \le-\eta(J^{\ve}r-r)\\
 & \le|\eta|\ve|\phi^{\ve}(r)|
\end{align*}
for every $\eta\in\phi(r)$. Hence, using \eqref{eq:phi-delta-bound} and \eqref{eq:MY-ineq} we obtain
\begin{align}
|\psi(r)-\psi^{\ve}(r)| & \le\ve|\phi(r)|^{2}\label{eq:yosida_convergence-2}\\
 & \le C\ve(1+\psi(r))\quad\forall r\in\R.\nonumber 
\end{align}
We note that for all $a,b\in\R$
\begin{align*}
(\phi^{\ve_{1}}(a)-\phi^{\ve_{2}}(b))\cdot(a-b)= & (\phi^{\ve_{1}}(a)-\phi^{\ve_{2}}(b))\cdot(J^{\ve_{1}}a-J^{\ve_{2}}b)\\
 & +(\phi^{\ve_{1}}(a)-\phi^{\ve_{2}}(b))\cdot(a-J^{\ve_{1}}a-(b-J^{\ve_{2}}b))\\
\ge & (\phi^{\ve_{1}}(a)-\phi^{\ve_{2}}(b))\cdot(\ve_{1}\phi^{\ve_{1}}(a)-\ve_{2}\phi^{\ve_{2}}(b))\\
\ge & -\frac{1}{2}(\ve_{1}+\ve_{2})\left(|\phi^{\ve_{1}}(a)|^{2}+|\phi^{\ve_{2}}(b)|^{2}\right).
\end{align*}
Since
\[
|\phi^{\ve_{1}}(a)|^{2}\le|\phi(a)|^{2}\le C(1+|a|^{2})
\]
we conclude
\begin{align}
(\phi^{\ve_{1}}(a)-\phi^{\ve_{2}}(b))\cdot(a-b) & \ge-C(\ve_{1}+\ve_{2})(1+|a|^{2}+|b|^{2}).\label{eq:monotone_Y_bound}
\end{align}

\section{Relaxation of $L^{m+1}$ norms on $H^{-1}$\label{sec:heaviside_relaxation}}

For $m\ge0$ we define
\[
L^{m+1}\cap H^{-1}:=\left\{ v\in L^{m+1}|\int vhdx\le C\|h\|_{H_{0}^{1}},\ \forall h\in C_{c}^{1}(\mcO)\ \text{for some }C\ge0\right\} .
\]
By continuity, every $v\in L^{m+1}\cap H^{-1}$ the map $h\mapsto\int vhdx$ can be uniquely extended from $C_{c}^{1}$ to a bounded linear functional on $H_{0}^{1}$. Hence, $L^{m+1}\cap H^{-1}\subseteq H^{-1}$. We set 
\[
\vp(v):=\begin{cases}
\frac{1}{m+1}\|v\|_{m+1}^{m+1} & ,\quad v\in L^{m+1}\cap H^{-1}\\
+\infty & ,\quad H^{-1}\setminus(L^{m+1}\cap H^{-1}),
\end{cases}
\]

\begin{lem}
\label{lem:lsc_hull_lm}Assume $m>0$. Then $\vp$ is lower-semicontinuous on $H^{-1}$. \end{lem}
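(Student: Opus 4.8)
The plan is to show that $\vp$ coincides with its lower-semicontinuous hull on $H^{-1}$, i.e. that the effective epigraph is already closed. Concretely, I would take a sequence $v_n \to v$ in $H^{-1}$ with $\liminf_n \vp(v_n) =: L < \infty$ and prove $\vp(v) \le L$. Passing to a subsequence we may assume $\vp(v_n) \to L$, so in particular $\sup_n \|v_n\|_{L^{m+1}}^{m+1} = (m+1)\sup_n \vp(v_n) < \infty$. Since $m > 0$, the space $L^{m+1}(\mcO)$ is reflexive, so (along a further subsequence) $v_n \rightharpoonup w$ weakly in $L^{m+1}$ for some $w \in L^{m+1}$. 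The norm $\|\cdot\|_{L^{m+1}}$ is weakly lower-semicontinuous on $L^{m+1}$, hence $\|w\|_{L^{m+1}}^{m+1} \le \liminf_n \|v_n\|_{L^{m+1}}^{m+1} = (m+1)L$.

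The second step is to identify $w$ with $v$. Weak convergence $v_n \rightharpoonup w$ in $L^{m+1}$ implies $\int v_n h\,dx \to \int w h\,dx$ for every $h \in L^{(m+1)'}(\mcO) \supseteq C_c^1(\mcO)$ (here $(m+1)' = \frac{m+1}{m}$ is the conjugate exponent). On the other hand, $v_n \to v$ in $H^{-1}$ gives $\int v_n h\,dx = {}_{H^{-1}}\langle v_n, h\rangle_{H_0^1} \to {}_{H^{-1}}\langle v, h\rangle_{H_0^1}$ for every $h \in C_c^1(\mcO) \subseteq H_0^1$. Hence $\int w h\,dx = {}_{H^{-1}}\langle v, h\rangle_{H_0^1}$ for all $h \in C_c^1(\mcO)$; since $C_c^1(\mcO)$ is dense in $H_0^1$ this shows that the functional $h \mapsto \int w h\,dx$ extends continuously to $H_0^1$ and equals $v$ as an element of $H^{-1}$. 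In particular $w \in L^{m+1} \cap H^{-1}$ and $w = v$ in $H^{-1}$, so $v \in L^{m+1} \cap H^{-1}$ with $\vp(v) = \frac{1}{m+1}\|w\|_{L^{m+1}}^{m+1} \le L$, which is the desired lower-semicontinuity.

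The only mildly delicate point is the interplay of the two topologies: one needs a dual pairing that both the weak-$L^{m+1}$ convergence and the $H^{-1}$ convergence can be tested against simultaneously, and $C_c^1(\mcO)$ serves exactly that role because it embeds continuously and densely in both $L^{(m+1)'}$ (for $\mcO$ bounded) and in $H_0^1$; the definition of $L^{m+1}\cap H^{-1}$ in the excerpt is tailored precisely so that this identification goes through. I expect this density/compatibility bookkeeping to be the main thing to get right; the reflexivity argument and weak lower-semicontinuity of the norm are standard. (Note the hypothesis $m > 0$ is essential here: for $m = 0$ reflexivity of $L^{m+1} = L^1$ fails, which is why that case is handled separately via the total variation relaxation in Lemma~\ref{lem:lsc_hull_l1}.)
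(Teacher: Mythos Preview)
Your proposal is correct and follows essentially the same route as the paper: bounded $L^{m+1}$ norm, reflexivity for $m>0$ to extract a weak $L^{m+1}$-subsequential limit, weak lower-semicontinuity of the norm, and identification of the weak limit with $v$ via testing against $C_c^1$. The paper's proof is more terse and leaves the identification step implicit, whereas you have spelled it out; otherwise the arguments coincide.
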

\begin{proof}
Let $v^{n}\in L^{m+1}\cap H^{-1}$ with $\|v^{n}\|_{m+1}^{m+1}\le C$ and $v^{n}\to v$ in $H^{-1}$. Then $v^{n}\rightharpoonup v$ in $L^{m+1}$ for some subsequence again denoted by $v^{n}$. Now $\|\cdot\|_{m+1}^{m+1}$ is weakly lower-semicontinuous on $L^{m+1}$ and thus
\[
\|v\|_{m+1}^{m+1}\le C.
\]

\end{proof}
Due to the lack of reflexivity of $L^{1}$ this argument fails in the case $m=0$. We next provide a characterization of the corresponding lower-semicontinuous hull of $\vp_{0}(\cdot):=\|\cdot\|_{1}$ on $H^{-1}$. 

Let $\mcM=\mcM(\mcO)$ be the space of signed Borel measures with finite total variation on $\mcO\subseteq\R^{d}$ and 
\[
\mcM\cap H^{-1}:=\left\{ \mu\in\mcM|\int_{\mcO}h(x)d\mu(x)\le C\|h\|_{H_{0}^{1}},\ \forall h\in C_{c}^{1}(\mcO)\ \text{for some }C\ge0\right\} .
\]
By continuity, for every $v\in\mcM\cap H^{-1}$ the map $h\mapsto\int hdv$ can be extended from $C_{c}^{1}$ to a (uniquely determined) bounded linear functional on $H_{0}^{1}$. The resulting map $\iota:\mcM\cap H^{-1}\to H^{-1}$ thus is injective. Hence, $\mcM\cap H^{-1}\subseteq H^{-1}$ and in the following we identify $\mcM\cap H^{-1}$ with its embedding into $H^{-1}$ , except for the proof of Lemma \ref{lem:lsc_hull_l1} below, where for $v\in\mcM\cap H^{-1}$ with $v=\iota(\mu)$ we write $v_{\mu}:=\iota(\mu)$. We extend $\mu\in\mcM(\mcO)$ to all of $\R^{d}$ by zero.
\begin{lem}
\label{lem:lsc_hull_l1}Let $m=0$ and $\vp$ be the lower-semicontinuous hull of $\vp_{0}(\cdot):=\|\cdot\|_{1}$ on $H^{-1}$. Then
\[
\vp(v):=\begin{cases}
\TV(\mu) & ,\quad v=v_{\mu}\in\mcM\cap H^{-1}\\
+\infty & ,\quad otherwise.
\end{cases}
\]
\end{lem}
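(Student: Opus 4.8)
The plan is to compute the lower-semicontinuous hull of $\vp_{0}(\cdot)=\|\cdot\|_{1}$ (defined on $L^{1}\cap H^{-1}$, extended by $+\infty$ elsewhere on $H^{-1}$) directly from its definition: for $v\in H^{-1}$,
\[
\vp(v)=\inf\left\{\liminf_{n\to\infty}\|v^{n}\|_{1}\ :\ v^{n}\in L^{1}\cap H^{-1},\ v^{n}\to v\text{ in }H^{-1}\right\}.
\]
Call the proposed right-hand side $F(v)$ (i.e.\ $\TV(\mu)$ if $v=v_{\mu}\in\mcM\cap H^{-1}$ and $+\infty$ otherwise). I would prove $\vp=F$ by establishing the two inequalities $\vp\le F$ and $\vp\ge F$ separately.

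For $\vp\le F$: it suffices to show $F$ is lower-semicontinuous on $H^{-1}$ and $F\le\vp_{0}$ on $L^{1}\cap H^{-1}$ (since the lsc hull is the largest lsc minorant); actually since $L^{1}\cap H^{-1}\subseteq\mcM\cap H^{-1}$ with $\TV$ agreeing with $\|\cdot\|_{1}$ there, $F\le\vp_{0}$ is immediate, so the content is lower-semicontinuity of $F$. Take $v^{n}\to v$ in $H^{-1}$ with $\liminf F(v^{n})<\infty$; pass to a subsequence realizing the liminf, so $v^{n}=v_{\mu^{n}}$ with $\TV(\mu^{n})$ bounded. By weak-$*$ compactness of bounded sets in $\mcM$ (Banach--Alaoglu, using that $\mcM$ is the dual of $C_{0}(\bar\mcO)$), a further subsequence has $\mu^{n}\rightharpoonup^{*}\mu$ in $\mcM$ with $\TV(\mu)\le\liminf\TV(\mu^{n})$. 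One must check this $\mu$ is the one representing $v$: for $h\in C_{c}^{1}(\mcO)$ one has $\int h\,d\mu^{n}=\langle v^{n},h\rangle\to\langle v,h\rangle$ (the pairing is continuous in $H^{-1}$ since $h\in H_{0}^{1}$) and $\int h\,d\mu^{n}\to\int h\,d\mu$ by weak-$*$ convergence, so $v=v_{\mu}\in\mcM\cap H^{-1}$ and $F(v)=\TV(\mu)\le\liminf F(v^{n})$.

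For $\vp\ge F$: given $v=v_{\mu}\in\mcM\cap H^{-1}$ I must exhibit a sequence $v^{n}\in L^{1}\cap H^{-1}$ with $v^{n}\to v$ in $H^{-1}$ and $\|v^{n}\|_{1}\to\TV(\mu)$ (this would give $\vp(v)\le\TV(\mu)=F(v)$, and for $v\notin\mcM\cap H^{-1}$ there is nothing to prove since $F(v)=+\infty$). The natural candidate is mollification: let $v^{n}:=\mu*\rho_{n}$ for a standard mollifier $\rho_{n}$ (using the zero-extension of $\mu$ to $\R^{d}$ fixed in the text). Then $v^{n}\in C^{\infty}$, $\|v^{n}\|_{L^{1}(\R^{d})}\le\TV(\mu)$, and $v^{n}\rightharpoonup^{*}\mu$ in $\mcM$, which by weak-$*$ lower semicontinuity of total variation forces $\|v^{n}\|_{1}\to\TV(\mu)$. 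It remains to verify $v^{n}\to v_{\mu}$ in $H^{-1}$: since $\|v^{n}\|_{1}$ is bounded and $v^{n}\rightharpoonup^{*}\mu$, for $h\in C_{c}^{1}(\mcO)$ one gets $\langle v^{n},h\rangle=\int(\mu*\rho_{n})h\,dx=\int(\rho_{n}*h)\,d\mu\to\int h\,d\mu=\langle v_{\mu},h\rangle$, so $v^{n}\rightharpoonup v_{\mu}$ weakly in $H^{-1}$; upgrading to norm convergence uses boundedness of $\|v^{n}\|_{H^{-1}}$ — a consequence of $\langle v^{n},h\rangle=\int(\rho_{n}*h)d\mu\le C\|\rho_{n}*h\|_{H_{0}^{1}}\le C\|h\|_{H_{0}^{1}}$ — together with a suitable compactness/density argument, or alternatively one replaces $v^{n}$ by truncations supported in $\mcO$ and argues directly.

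The main obstacle I anticipate is exactly this last point: showing the mollified measures converge to $v_{\mu}$ in the \emph{strong} $H^{-1}$ norm rather than merely weakly, while simultaneously keeping the $L^{1}$ norms converging to $\TV(\mu)$. Near $\partial\mcO$ the zero-extension and mollification can create mass just outside $\mcO$ or fail to respect the Dirichlet structure, so some care (cutoff functions, a diagonal argument combining mollification scale with a boundary layer parameter, and the defining energy bound $\int h\,d\mu\le C\|h\|_{H_{0}^{1}}$) is needed. The weak-$*$ compactness and lower-semicontinuity inputs for the reverse inequality are standard and should go through without difficulty.
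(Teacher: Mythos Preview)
Your lower-semicontinuity half matches the paper's argument essentially verbatim (weak-$*$ compactness in $\mcM=C_0(\bar{\mcO})^*$, identification of the limit via $C_c^1$ test functions, lower semicontinuity of total variation). A minor point: your two inequalities are labeled the wrong way round---showing $F$ is l.s.c.\ and $F\le\vp_0$ yields $F\le\vp$ (since $\vp$ is the \emph{largest} l.s.c.\ minorant), while the recovery sequence yields $\vp\le F$; you clearly have the content right despite the swapped labels.

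The substantive gap is in the recovery sequence. Your mollification $v^n=\mu*\rho_n$ runs into exactly the obstacle you flag, and your proposed fixes do not close it. Boundedness plus weak convergence in the Hilbert space $H^{-1}$ does \emph{not} give strong convergence; there is no relevant compact embedding here, since you need convergence in $H^{-1}$ itself. Moreover, your estimate $\int(\rho_n*h)\,d\mu\le C\|\rho_n*h\|_{H_0^1}$ is not justified: the energy inequality defining $\mcM\cap H^{-1}$ is only available for test functions in $H_0^1(\mcO)$, and for $h\in H_0^1(\mcO)$ the convolution $\rho_n*h$ is generally \emph{not} in $H_0^1(\mcO)$ (it need not vanish on $\partial\mcO$). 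The boundary-cutoff/diagonal scheme you allude to might be salvageable, but it is not routine and you have not supplied it.

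The paper avoids all of this by replacing mollification with the resolvent $J^\ve=(I-\ve\D)^{-1}$ on $H^{-1}$. This makes strong $H^{-1}$ convergence automatic ($J^\ve v_\mu\to v_\mu$ in $H^{-1}$ is a basic resolvent property) and gives $J^\ve v_\mu\in L^2\subseteq L^1\cap H^{-1}$. The total variation control comes from duality: $J^\ve$ acts on $C_0(\bar{\mcO})$ with $\|J^\ve h\|_{C_0}\le\|h\|_{C_0}$ by the maximum principle, so $\TV(J^\ve\mu)\le\TV(\mu)$; combined with weak-$*$ convergence $J^\ve\mu\rightharpoonup\mu$ in $\mcM$ and lower semicontinuity of $\TV$, one gets $\TV(J^\ve\mu)\to\TV(\mu)$. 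The resolvent thus respects both the $H^{-1}$ geometry and the Dirichlet boundary simultaneously, which is precisely where mollification breaks down.
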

\begin{proof}
We first prove that $\vp$ is weakly (hence strongly) lower-semicontinuous on $H^{-1}$. We recall that $\mcM$ is the dual $C_{0}(\mcO)^{*}$ of $C_{0}(\mcO)$, i.e. the set of all continuous functions vanishing onf the boundary $\partial\mcO$, equipped with the sup-norm. Let $v_{\mu^{n}}\in\mcM\cap H^{-1}$ with $\vp(v_{\mu^{n}})\le C$ and $v_{\mu^{n}}\rightharpoonup v$ in $H^{-1}$. Since $C_{0}(\mcO)$ is separable and $\mu^{n}$ has uniformly bounded total variation we have
\[
\mu^{n}\rightharpoonup\td\mu
\]
weakly$^{*}$ in $\mcM$ for some subsequence of $\mu^{n}$ and some $\td\mu\in\mcM$. Taking the limit $n\to\infty$ in
\[
v_{\mu^{n}}(h)=\int_{\mcO}h(x)d\mu^{n}(x),\quad h\in C_{c}^{1}
\]
and using $v_{\mu^{n}}\rightharpoonup v$ in $H^{-1}$ yields
\[
v(h)=\int_{\mcO}h(x)d\td\mu(x)\quad\forall h\in C_{c}^{1}
\]
and thus $v=v_{\td\mu}\in\mcM\cap H^{-1}$. Since the total variation norm is lower semicontinuous with respect to weak convergence we obtain
\[
\vp(v)=\TV(\td\mu)\le\liminf_{n\to\infty}\TV(\mu^{n})=\liminf_{n\to\infty}\vp(v_{\mu^{n}})\le C,
\]
which proves $\vp$ to be weakly lower-semicontinuous on $H^{-1}$. 

It remains to prove that $\vp$ is the lower-semicontinuous hull on $H^{-1}$ of $\vp$ restricted to $L^{1}\cap H^{-1}$. Let $v\in H^{-1}$ and let $v^{\ve}:=J^{\ve}v\in L^{2}$, where $J^{\ve}:=(I-\ve\D)^{-1}$ is the resolvent of $-\D$ in $H^{-1}$. Note that $v^{\ve}\to v$ in $H^{-1}$, where we identify $v^{\ve}\in L^{2}$ with its embedding into $H^{-1}$ via
\[
v^{\ve}(h)=\int_{\mcO}v^{\ve}hdx,\quad h\in H_{0}^{1}.
\]
 For $v\in L^{2}\subseteq H^{-1}$ we observe
\begin{align*}
(J^{\ve}v)(h) & =\int_{\mcO}(J^{\ve}v)hdx\\
 & =\int_{\mcO}vJ^{\ve}hdx\\
 & =v(J^{\ve}h)\quad\forall h\in H_{0}^{1}.
\end{align*}
By the density of $L^{2}$ in $H^{-1}$ this yields
\[
J^{\ve}v=v\circ J^{\ve}\quad\forall v\in H^{-1}.
\]
Moreover, we recall $J^{\ve}:C_{0}(\mcO)\to C_{0}(\mcO)$, $\|J^{\ve}h\|_{C_{0}}\le\|h\|_{C_{0}}$ for all $h\in C_{0}$ and for $h\in C_{c}^{\infty}(\mcO)$ we have
\begin{equation}
J^{\ve}h\to h\quad\text{in }C_{0}.\label{eq:res_conv_in_sup}
\end{equation}
Let $\mu\in\mcM=(C_{0}(\mcO))^{*}$. We define $\mu^{\ve}(h)=(J^{\ve}\mu)(h):=\mu(J^{\ve}h)$ for $h\in C_{0}$ and identify $\mu^{\ve}$ with its representation in $\mcM$. Hence,
\begin{align*}
\int_{\mcO}hd\mu^{\ve} & =\int_{\mcO}J^{\ve}hd\mu
\end{align*}
for all $h\in C_{0}$ and thus
\begin{align*}
TV(\mu^{\ve}) & =\sup_{\|h\|_{C_{0}\le1}}\mu^{\ve}(h)\\
 & \le TV(\mu)\sup_{\|h\|_{C_{0}\le1}}\|J^{\ve}h\|_{C_{0}}\\
 & \le TV(\mu).
\end{align*}
Consequently, there is a subsequence $\mu^{\ve_{n}}\rightharpoonup\td\mu$ weakly$^{*}$ in $\mcM$ and due to \eqref{eq:res_conv_in_sup} this means $\mu^{\ve_{n}}\rightharpoonup\mu$ weakly$^{*}$ in $\mcM$ and
\begin{align*}
TV(\mu) & \le\liminf_{n\to\infty}TV(\mu^{\ve_{n}})\le TV(\mu).
\end{align*}
A standard contradiction argument then yields
\begin{equation}
\lim_{\ve\to0}TV(\mu^{\ve})=TV(\mu).\label{eq:TV_convergence}
\end{equation}
Let now $v_{\mu}\in\mcM\cap H^{-1}$. Then
\begin{align*}
(J^{\ve}v_{\mu})(h) & =v_{\mu}(J^{\ve}h)\\
 & =\int_{\mcO}J^{\ve}hd\mu\\
 & =\int_{\mcO}hd\mu^{\ve},
\end{align*}
for all $h\in C_{c}^{1}$ and thus $J^{\ve}v_{\mu}\in\mcM\cap H^{-1}$ with $J^{\ve}v_{\mu}=v_{J^{\ve}\mu}$. Due to \eqref{eq:TV_convergence} this implies
\begin{align*}
\vp(J^{\ve}v_{\mu}) & =\vp(v_{J^{\ve}\mu})\\
 & =TV(J^{\ve}\mu)\\
 & \to TV(\mu)\\
 & =\vp(v_{\mu}).
\end{align*}
Since $J^{\ve}v_{\mu}\in L^{2}\subseteq L^{1}\cap H^{-1}$ and $J^{\ve}v_{\mu}\to v_{\mu}$ in $H^{-1}$, $\vp$ is the lower-semicontinuous hull on $H^{-1}$ of $\vp$ restricted to $L^{1}\cap H^{-1}$. \end{proof}
\begin{lem}
\label{lem:subgradient}Let $m\in[0,1]$ and $v\in L^{m+1}\cap H^{-1}$. Then
\[
\partial\vp(v)\supseteq\{-\D w:\ w\in H_{0}^{1},\ w\in\phi(v)\text{ a.e.}\}.
\]
\end{lem}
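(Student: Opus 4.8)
The plan is to verify the defining subgradient inequality by hand, reducing it to the pointwise convexity of $\psi$ together with an identification of the $H^{-1}$-inner product against $-\D w$ with a $w$-weighted integral. Fix $v\in L^{m+1}\cap H^{-1}$ and $w\in H_{0}^{1}$ with $w\in\phi(v)$ a.e., and put $\zeta:=-\D w\in H^{-1}$. I must show
\[
\vp(u)\ge\vp(v)+(\zeta,u-v)_{H^{-1}}\qquad\text{for every }u\in H^{-1};
\]
note $\vp(v)<\infty$ and, since $\zeta\in H^{-1}$, the right-hand side is always finite. The relation $w\in\phi(v)$ a.e. forces $|w|=|v|^{m}$ a.e. if $m>0$, so $w\in L^{(m+1)/m}$, the conjugate exponent of $m+1$, and $|w|\le1$ a.e. if $m=0$, so $w\in L^{\infty}$; in either case $wz\in L^{1}(\mcO)$ for every $z\in L^{m+1}\cap H^{-1}$ by H\"older's inequality.

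The first step is to establish that $(\zeta,z)_{H^{-1}}=\int_{\mcO}zw\,dx$ for all $z\in L^{m+1}\cap H^{-1}$. By the Riesz identification of $H^{-1}$ with $(H_{0}^{1})^{*}$ through $-\D$, the left-hand side equals ${}_{H^{-1}}\langle z,w\rangle_{H_{0}^{1}}$, where $z$ is read as the bounded functional on $H_{0}^{1}$ extending $h\mapsto\int zh\,dx$, $h\in C_{c}^{1}$, as in Appendix~\ref{sec:heaviside_relaxation}. For $g\in C_{c}^{1}$ the identity is the definition. I would then pass first to $g\in H_{0}^{1}\cap L^{\infty}$: approximate $g$ in $H_{0}^{1}$ by $C_{c}^{1}$ functions uniformly bounded by $\|g\|_{\infty}$ (truncate and mollify) and converging a.e., and let the index go to infinity, using continuity of the pairing on one side and dominated convergence (legitimate as $z\in L^{1}(\mcO)$ since $\mcO$ is bounded) on the other. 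Finally I would pass to $g=w$ using the truncations $w_{k}=(w\wedge k)\vee(-k)\in H_{0}^{1}\cap L^{\infty}$, which converge to $w$ both in $H_{0}^{1}$ and in $L^{(m+1)/m}$ (and are eventually equal to $w$ when $m=0$), so the limit may be taken by continuity of the pairing and H\"older's inequality.

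With the pairing formula in hand, I would treat $u\in L^{m+1}\cap H^{-1}$, where $\vp(u)=\int_{\mcO}\psi(u)\,dx<\infty$: since $\psi$ is convex with $\partial\psi=\phi$ and $w(x)\in\phi(v(x))$ for a.e. $x$, the pointwise subgradient inequality $\psi(u)\ge\psi(v)+w(u-v)$ holds a.e. on $\mcO$; all three terms are in $L^{1}(\mcO)$, so integrating and applying the pairing formula with $z=u-v$ yields $\vp(u)\ge\vp(v)+(\zeta,u-v)_{H^{-1}}$. For general $u\in H^{-1}$: if $u\notin L^{m+1}\cap H^{-1}$ (when $m>0$), or $u\notin\mcM\cap H^{-1}$ (when $m=0$), then $\vp(u)=+\infty$ and the inequality is trivial; in the one remaining case, $m=0$ and $u=v_{\mu}\in\mcM\cap H^{-1}$ with no $L^{1}$ density, I would invoke Lemma~\ref{lem:lsc_hull_l1} to choose $u^{n}\in L^{1}\cap H^{-1}$ with $u^{n}\to v_{\mu}$ in $H^{-1}$ and $\vp(u^{n})\to\vp(v_{\mu})$, apply the case already proved to each $u^{n}$, and pass to the limit, the right-hand side being continuous in $u$ on $H^{-1}$. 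This gives $\zeta\in\partial\vp(v)$.

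The main obstacle is the pairing formula of the first step: matching the abstract $H^{-1}$--$H_{0}^{1}$ duality of an embedded $L^{m+1}$-function with the concrete integral $\int zw\,dx$ is not automatic, since $H_{0}^{1}$-convergence of smooth test functions need not imply convergence in the conjugate exponent $L^{(m+1)/m}$ when $m$ is small relative to $d$. I would circumvent this by routing the approximation through uniformly bounded functions, where boundedness of $\mcO$ makes dominated convergence available, and then truncating $w$, exploiting the a priori bound $|w|=|v|^{m}\in L^{(m+1)/m}$.
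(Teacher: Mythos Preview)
Your proof is correct and follows the same overall strategy as the paper---reduce to the pointwise convexity inequality for $\psi$ after identifying $(-\D w,z)_{H^{-1}}$ with $\int_{\mcO}wz\,dx$ for $z\in L^{m+1}\cap H^{-1}$---but the two arguments diverge in how this identification is carried out. The paper approximates $z$ via the resolvent $J_{\ve}=(I-\ve\D)^{-1}$: since $J_{\ve}z\in L^{2}$ one has $(-\D w,J_{\ve}z)_{H^{-1}}=(w,J_{\ve}z)_{2}$ directly, and then $J_{\ve}z\to z$ in $L^{p}$ for every $p\in[1,\infty)$ is used to pass to the limit. You instead approximate on the other side, first extending the duality formula from $C_{c}^{1}$ test functions to $H_{0}^{1}\cap L^{\infty}$ via uniformly bounded smooth approximants and dominated convergence, and then to $w$ itself by truncation, using $w\in L^{(m+1)/m}$. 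Your route is more elementary in that it avoids the $L^{p}$-contractivity and convergence properties of the resolvent; the paper's route is shorter once those properties are taken for granted. A further difference: in the case $m=0$ you explicitly treat $u=v_{\mu}\in\mcM\cap H^{-1}$ without $L^{1}$ density by approximating via Lemma~\ref{lem:lsc_hull_l1} and passing to the limit, whereas the paper's argument as written only bounds by $\int_{\mcO}|y|\,dx$ and hence literally covers $y\in L^{1}\cap H^{-1}$; your extra step makes the closure for general finite-energy measures explicit.
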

\begin{proof}
\textbf{Case $m>0$:} Let $J_{\ve}$, $\ve>0$ be as in the proof of Lemma \ref{lem:lsc_hull_l1}. Assume that $w=\phi(v)=|v|^{m-1}v\in H_{0}^{1}$. Then $w\in L^{\frac{m+1}{m}}$ and we have to show that 
\[
\vp(v)\le(-\D w,v-y)+\vp(y)\quad\forall y\in L^{m+1}\cap H^{-1}.
\]
As in the proof of Lemma \ref{lem:lsc_hull_l1} we have for all $y\in L^{m+1}\cap H^{-1}$
\begin{align*}
 & \vp(v)-(-\D w,v-y)_{H^{-1}}\\
 & =\vp(v)-\lim_{\ve\to0}(-\D w,J_{\ve}(v-y))_{H^{-1}}\\
 & =\vp(v)-\lim_{\ve\to0}(w,J_{\ve}(v-y))_{2}\\
 & =\vp(v)-\int_{\mcO}w(v-y)dx,
\end{align*}
where we used that $J_{\ve}v\to v$ in $L^{p}$ as $\ve\to0$ for every $v\in L^{p}$ and all $p\in[1,\infty)$. The last expression equals
\begin{align*}
 & \frac{1}{m+1}\int_{\mcO}|v|^{m+1}dx-\int_{\mcO}w(v-y)dx\\
 & \le\frac{1}{m+1}\int_{\mcO}|v|^{m+1}dx-\int_{\mcO}|v|^{m+1}dx+\int_{\mcO}|v|^{m-1}vydx\\
 & \le\vp(y),
\end{align*}
where we used Hölder's and Young's inequality in the last step. This finishes the proof.

\textbf{Case $m=0$:} Assume $w\in\phi(v)$ a.e. with $w\in H_{0}^{1}$. Arguing as in the case $m>0$ and using $|w|\le1$ a.e. we have
\begin{align*}
\vp(v)-(-\D w,v-y)_{H^{-1}} & =\vp(v)-\int_{\mcO}w(v-y)dx\\
 & \le\int_{\mcO}|y|dx,
\end{align*}
which finishes the proof. $ $
\end{proof}
\bibliographystyle{plain}
\bibliography{/home/benni/cloud/current_work/latex-refs/refs}

\end{document}